\newcommand{\R}{\mathbb{R}}
\newcommand{\N}{\mathbb{N}}
\newcommand{\mc}[1]{\mathcal{#1}}
\newcommand{\diag}{\mathrm{diag}}
\newcommand{\col}{\mathrm{col}}
\newcommand{\bs}[1]{\boldsymbol{#1}}
\newcommand{\bsone}{\boldsymbol{1}}
\newtheorem{theorem}{Theorem}
\newtheorem{definition}{Definition}
\newtheorem{proposition}{Proposition}
\newtheorem{lemma}{Lemma}
\newtheorem{remark}{Remark}
\newtheorem{standing}{Standing Assumption}
\newcommand{\blue}[1]{{#1}}
\newacronym{LICQ}{LICQ}{linear independent constraint qualification}
\newacronym{GNEP}{GNEP}{generalized Nash equilibrium problem}
\newacronym{l-SE}{$\ell$-SE}{local Stackelberg equilibrium}
\newacronym{ADAL}{ADAL}{accelerated distributed augmented Lagrangian}
\newacronym{SCA}{SCA}{sequential convex approximation}
\newacronym{PEV}{PEV}{Plug-in Electric Vehicle}
\newglossaryentry{v-GNE}
{
	name={v-GNE},
	description={generalized variational Nash equilbrium},
	first={\glsentrydesc{v-GNE} (\glsentrytext{v-GNE})},
	plural={v-GNE},
	descriptionplural={generalized variational Nash equilbria},
	firstplural={\glsentrydescplural{v-GNE} (\glsentryplural{v-GNE})}
}
\newglossaryentry{MPCC}
{
	name={MPCC},
	description={mathematical program with complementarity constraints},
	first={\glsentrydesc{MPCC} (\glsentrytext{MPCC})},
	plural={MPCCs},
	descriptionplural={mathematical programs with complementarity constraints},
	firstplural={\glsentrydescplural{MPCC} (\glsentryplural{MPCC})}
}
\newglossaryentry{MPEC}
{
	name={MPEC},
	description={mathematical program with equilibrium constraints},
	first={\glsentrydesc{MPEC} (\glsentrytext{MPEC})},
	plural={MPECs},
	descriptionplural={mathematical programs with equilibrium constraints},
	firstplural={\glsentrydescplural{MPEC} (\glsentryplural{MPEC})}
}
\begin{document}
\title{Local Stackelberg equilibrium seeking in generalized aggregative games}
\author{Filippo Fabiani, Mohammad Amin Tajeddini, Hamed Kebriaei, \IEEEmembership{Senior Member, IEEE},\\ and Sergio Grammatico, \IEEEmembership{Senior Member, IEEE}
\thanks{F.~Fabiani is with the Department of Engineering Science, University of Oxford, OX1 3PJ, United Kingdom ({\tt filippo.fabiani@eng.ox.ac.uk}). S.~Grammatico is with the Delft Center for Systems and Control, TU Delft, The Netherlands ({\tt s.grammatico@tudelft.nl}). M.~A.~Tajeddini and H.~Kebriaei are with the School of Electrical and Computer Engineering, College of Engineering, University of Tehran, Iran ({\tt\{a.tajeddini, kebriaei\}@ut.ac.it}). 
This work was partially supported by the ERC under research project COSMOS (ERC-StG 802348).}}

\maketitle

%%%%%%%%%%%%%%%%%%%%%%%%%%%%%%%%%%%%%%%%%%%%%%%%%%%%%%%%%%%%%%%%%%%%%%%%
\begin{abstract}
We propose a two-layer, semi-decentralized algorithm to compute a local solution to the Stackelberg equilibrium problem in aggregative games with coupling constraints. Specifically, we focus on a single-leader, multiple-follower problem, and after equivalently recasting the Stackelberg game as a \gls{MPCC}, we iteratively convexify a regularized version of the \gls{MPCC} as inner problem, whose solution generates a sequence of feasible descent directions for the original \gls{MPCC}. Thus, by pursuing a descent direction at every outer iteration, we establish convergence to a local Stackelberg equilibrium. Finally, the proposed algorithm is tested on a numerical case study involving a hierarchical instance of the charging coordination of \glspl{PEV}.
%, also investigating the behavior of the leader and followers as the regularization parameter varies.

\end{abstract}

\begin{IEEEkeywords}
Stackelberg equilibrium, game theory, hierarchical systems, optimization.
\end{IEEEkeywords}

%%%%%%%%%%%%%%%%%%%%%%%%%%%%%%%%%%%%%%%%%%%%%%%%%%%%%%%%%%%%%%%%%%%%%%%%
\section{Introduction}\label{sec:introduction}
Stackelberg equilibrium problems are very popular within the system-and-control community, since they offer a multi-agent, decision-making framework that enables to model not only ``horizontal'' but also ``vertical'' interdependent relationships among heterogeneous agents, which are therefore clustered into \textit{leaders} and \textit{followers}. The application domains of Stackelberg equilibrium problems are, indeed, numerous, spanning from wireless networks, telecommunications \cite{liu2019sinr}, and network security \cite{TongwenChen2018}, to demand response and energy management \cite{motalleb2019networked,chen2017stackelberg,mendoza2019online}, economics \cite{hirose2019comparing}, and traffic control \cite{groot2017hierarchical}. 

In its most general setting, a Stackelberg equilibrium problem between a leader and a set of followers can be formulated as a \gls{MPEC} \cite[\S 1.2]{luo1996mathematical} or, in some specific cases, as an \gls{MPCC} \cite{scheel2000mathematical}. 
%Irrespective of the data of the problem itself, 
\blue{Both \glspl{MPEC} and \glspl{MPCC} are usually challenging to solve. Specifically, they} are inherently ill-posed, nonconvex optimization problems, since typically there are no feasible solutions strictly lying in the interior of the feasible set\blue{, which may even be disconnected},  implying that any constraint qualification is violated at every feasible point \cite{jongen1991nonlinear}.
\blue{It follows that, in this context,} the basic convergence assumptions \blue{characterizing} standard constrained optimization algorithms are not satisfied. Therefore, available solution methods are either tailored to the specific problem considered, or designed \textit{ad hoc} for a sub-class of \glspl{MPEC}/\glspl{MPCC}.

Algorithmic solution techniques for \blue{the class of} games involving dominant and nondominant strategies, i.e. leaders and followers, trace back to the 70s. For example, open-loop and feedback control policies for differential, \blue{hence continuous-time}, unconstrained games were designed in \cite{simaan1973stackelberg,kydland1977equilibrium}, while in \cite{kydland1975noncooperative} a comparison between finite/infinite horizon control strategies involving discrete-time dynamics was proposed. More recently, a single-leader, multi-follower differential game, modeling a pricing scheme for the Internet by basing on the bandwidth usage of the users, i.e., with congestion constraints, was solved in \cite{bacsar2002stackelberg}, and an iterative procedure to compute a Stackelberg-Nash-saddle point for an unconstrained, single-leader, multi-follower game with discrete-time dynamics was proposed in \cite{kebriaei2017discrete}.
\blue{By relying on the uniqueness of the followers' equilibrium for each leader's strategy, standard fixed-point algorithms are also proposed in \cite{tushar2012economics,zou2017decentralized}.}
\blue{A first attempt to solve an \gls{MPEC} modelling} a more elaborated multi-leader, multi-follower game, was investigated in \cite{kulkarni2015existence}\blue{. Specifically, the authors established} the equivalence to a single-leader, multi-follower game whenever the cost functions of the leaders admit a potential function and, in addition, the set of leaders has an identical conjecture or estimate on the follower equilibrium. Similar arguments are also exploited in \cite{leyffer2010solving} to address the same multi-leader, multi-follower equilibrium problem. In this latter case, for each leader, the authors proposed a single-leader, multi-follower game \blue{modelled as an \gls{MPEC}}. On the other hand, all these sub-games, which are parametric in the decisions of the followers, are coupled together through a game \blue{against} the leaders themselves. However, in both papers the solution to the single-leader, multi-follower game remains to be dealt with, mainly due to the presence of nonconvexities and equilibrium/complementarity constraints which characterize \gls{MPEC}/\gls{MPCC}. 
%Computing an equilibrium of single-leader, multi-follower Stackelberg games formulated as \glspl{MPCC} is quite difficult indeed, especially for large number of followers. 
\blue{Early algorithmic works on \glspl{MPCC}} to solve single-leader, multi-follower Stackelberg games, such as Gauss-Seidel or Jacobi \cite{hobbs2001linear,ehrenmann2009comparison}, are computationally expensive\blue{, especially for large number of followers. 
A}dditionally, they introduce several privacy issues, since they are designed by relying on diagonalization techniques.
%Early algorithmic works on \glspl{MPCC} focused on diagonalization techniques such as Gauss-Jacobi and Gauss-Seidel methods , where at every iteration each agent chooses the current optimal decision, given the decisions of the other agents from the previous iteration. 
\blue{In \cite{su2004sequential}, after relaxing the complementarity conditions, a solution to an \gls{MPCC} is computed through nonlinear complementarity problems, towards driving the relaxation parameter to zero.}

\blue{Our work aims at filling the} apparent lack \blue{in the aforementioned literature} of scalable and privacy preserving solution \blue{algorithms} for equilibrium problems with nonconvex data and complementarity conditions\blue{, i.e., \glspl{MPEC}/\glspl{MPCC}}. \blue{Specifically,} we leverage on the \gls{SCA} to design a two-layer, semi-decentralized algorithm suitable to iteratively compute a local solution to the Stackelberg equilibrium problem involving a single leader and multiple followers in aggregative form \blue{with coupling constraints}. The main contributions of the paper are summarized as follows:
\begin{itemize}
\item We reformulate the Stackelberg game as an \gls{MPCC} by embedding it into the leader nonconvex optimization problem the equivalent KKT conditions to compute a \gls{v-GNE} \cite{facchinei2007finite} for the followers' game (\S II);

\item We exploit a key result provided in \cite{scholtes2001convergence} to locally relax the complementarity constraints, obtaining the \gls{MPCC}-LICQ \cite[Def.~3.1]{fletcher2006local}, i.e., the \gls{LICQ} of all the points inside a certain neighborhood of the originally formulated \gls{MPCC} (\S III);

\item Along the same lines of \cite{scutari2014decomposition,scutari2017parallel}, we propose to convexify the relaxed \gls{MPCC} at every iteration of the outer loop, whose optimal solution, computed within the inner loop, points a descent direction for the cost function of the original \gls{MPCC}. By pursuing such a descent direction, the sequence of feasible points generated by the outer loop directly leads to a local solution of the Stackelberg equilibrium problem (\S III);

\item We analyze the performance of the proposed algorithm applied to a numerical instance of the charging coordination problem for a fleet of \glspl{PEV}, also investigating the behavior of the leader and the followers as the regularization parameter varies (\S IV).
\end{itemize}
To the best of our knowledge, the proposed two-layer algorithm represents the first attempt to compute a \blue{local} solution to the Stackelberg equilibrium problem involving nonconvex data and equilibrium constraints by directly exploiting (and preserving) the hierarchical, multi-agent structure of the original aggregative game.

%The paper is organized as follows: in \S II, we introduce the Stackelberg equilibrium problem addressed for generalized aggregative games and the complementarity constraints relaxation, while in \S III, we present the two-layer algorithm and discuss its local convergence properties. 

%\smallskip

\subsection*{Notation}
$\N$, $\R$ and $\R_{\geq 0}$ denote the set of natural, real and nonnegative real numbers. $\bsone$ represents a vector with all elements equal to $1$. For vectors $v_1,\dots,v_N\in\mathbb{R}^n$ and $\mc I=\{1,\dots,N \}$, we denote $\bs{v}:= (v_1 ^\top,\dots ,v_N^\top )^\top = \mathrm{col}(\{v_i\}_{i\in\mc I})$ and $ \bs{v}_{-i} \coloneqq \col(\{ v_j \}_{j\in\mc I\setminus \{i\}})$. We also use $\bs{v} = (v_i,\bs{v}_{-i})$. \blue{$v \perp w$ means that $v$ and $w$ are orthogonal vectors.} Given a matrix $A \in \R^{m \times n}$, $A^\top$ denotes its transpose. $A \otimes B$ represents the Kronecker product between the matrices $A$ and $B$. For a function $f : \R^n \times \R^n \to \R$, $f(v;\bar{v})$ denotes the approximation of $f$ at some $\bar{v}$. \blue{For a set-valued mapping $\mc{F} : \R^n \rightrightarrows \R^m$, $\textrm{gph}(\mc{F}) \coloneqq \{(y,x) \in \R^{n}\times \R^m \mid x \in \mc{F}(y)\}$ denotes its graph.}

%%%%%%%%%%%%%%%%%%%%%%%%%%%%%%%%%%%%%%%%%%%%%%%%%%%%%%%%%%%%%%%%%%%%%%%%
\section{Mathematical setup}
\subsection{Stackelberg game}
We consider a hierarchical noncooperative game with one leader, controlling its decision variable $y_0 \in \R^{n_0}$, and  $N$ followers, indexed by the set $\mc{I} \coloneqq \{1, \ldots, N\}$, where each follower $i \in \mc{I}$ controls its own variable $x_i \in \mc{X}_i \coloneqq \left\{x_i \in \R^{n_i} \mid F_i x_i \leq g_i \right\}$, $F_i \in \R^{p_i \times n_i}$, $g_i \in \R^{p_i}$, and aims at solving the following optimization problem:
\begin{equation}\label{eq:single_prob}
\forall i \in \mc{I} : \left\{
\begin{aligned}
&\underset{x_i \in \mc{X}_i}{\textrm{min}} & & J_i (y_0, x_i,  \bs{x}_{-i})\\
&\hspace{.1cm}\textrm{ s.t. } & & A_i x_i + \textstyle\sum_{j \in \mc{I} \setminus \{i\}} A_j x_j \leq b,
\end{aligned}	
\right.
\end{equation}
for some cost function $J_i:\R^{n_0} \times \R^{n} \to \R$. Let $\bs{x} \coloneqq \col(\{x_i\}_{i \in \mc{I}}) \in \R^n$, $n = \sum_{i \in \mc{I}} n_i$, be the collective vector of strategies of the followers, while $\bs{x}_{-i} \in \R^{n - n_i}$ stacks all the local decision variables except the $i$-th one. We postulate the following standard assumptions on the followers' data in \eqref{eq:single_prob}.
\smallskip
\begin{standing}\label{ass:C_1}
	For each $i \in \mc{I}$, the function $J_i(y_0, \cdot)$ is convex and continuously differentiable, for fixed $y_0$.
	\hfill$\square$
\end{standing}
\smallskip
\begin{standing}\label{ass:full_row_rank}
	For each $i \in \mc{I}$, $\textrm{rank}(F_i) = p_i$.
	\hfill$\square$
\end{standing}
\smallskip

In \eqref{eq:single_prob}, each matrix $A_i \in \R^{m \times n_i}$ stacks $m$ linear coupling constraints, while $b \in \R^m$ is the vector of shared resources among the followers.
Let $A \coloneqq \left[A_1 \, \ldots \, A_N\right] \in \R^{m \times n}$. Then, we preliminary define the sets $\mc{X} \coloneqq \prod_{i \in \mc{I}} \mc{X}_i$ and $\Theta \coloneqq \{\bs{x} \in \mc{X} \mid A \bs{x} \leq b\}$.

For a fixed strategy of the leader, $y_0$, the followers aim to solve a \gls{GNEP}. Specifically, by focusing on \glspl{v-GNE}, such problem is equivalent to solve VI$(\Theta,H(y_0, \cdot))$ \cite{facchinei2007finite}, where, in view of Standing Assumption~\ref{ass:C_1}, $H : \R^{n_0} \times \R^n \rightrightarrows \R^n$ is a continuously differentiable set-valued mapping defined as $H\left(y_0, \bs{x}\right) \coloneqq \col(\{\nabla_{x_{i}} J_{i}\left(y_0, \bs{x}\right)\}_{i \in \mathcal{I}})$. \blue{This fact, along with the properties of $\Theta$, guarantee the nonemptiness of the set of \gls{v-GNE} that,} for any $y_0 \in \mc{Y}_0$, corresponds to the set
\begin{equation}\label{eq:set_vGNE}
	\mc{S}(y_0)\coloneqq \{\bs{x} \in \Theta \mid (\bs{z} - \bs{x})^\top H(y_0, \bs{x}) \geq 0, \; \forall \bs{z} \in \Theta\}.
\end{equation}
On the other hand, the optimization problem of the leader reads as:

\begin{equation}\label{eq:single_prob_leader}
	\left\{
	\begin{aligned}
		&\underset{y_0, \bs{x}}{\textrm{min}} & & J_0 (y_0, \bs{x})\\
		&\textrm{ s.t. } & & \blue{(y_0, \bs{x}) \in \textrm{gph}(\mc{S}) \cap (\mc{Y}_0 \times \R^n)},\\
	\end{aligned}	
	\right.
\end{equation}
for some cost function $J_0 : \R^{n_0} \times \R^n \to \R$ and local constraint set $\mc{Y}_0$ characterized by the following standard conditions.
\smallskip
\begin{standing}\label{ass:Y_0}
	The set $\mc{Y}_0$ is nonempty, closed and convex.
	\hfill$\square$
\end{standing}
\smallskip
\begin{standing}\label{ass:continuity}
		The function $J_0$ is coercive, its gradient $\nabla J_0$ is Lipschitz continuous on $\Phi \coloneqq \mc{Y}_0 \times \mc{X}$ with constant $\kappa_0$.
		\hfill$\square$
\end{standing}
\smallskip

\blue{
We note that \eqref{eq:single_prob_leader} defines an \gls{MPEC} where $\bs{x}$ is not strictly within the leader’s control, but it corresponds to an optimistic conjecture \cite{kulkarni2015existence}. In view of \cite[Th.~1.4.1]{luo1996mathematical}, the MPEC in \eqref{eq:single_prob_leader} admits an optimal solution, since the coerciveness of $J_0$ implies compactness of its level sets, and the feasible set, $\textrm{gph}(\mc{S}) \cap (\mc{Y}_0 \times \R^n)$, is closed under the postulated assumptions. Therefore, this ensures existence of a solution to the hierarchical game, according to the following notion of local generalized Stackelberg equilibrium, inspired by \cite{hu2007using,kulkarni2015existence}.
}
\smallskip
%\vspace{-.3cm}
\begin{definition}\label{def:S-EQ}
	A pair $(y_0^{*}, \bs{x}^{*}) \in \blue{ \textrm{gph}(\mc{S}) \cap (\mc{Y}_0 \times \R^n)}$\blue{, with $\mathcal{S}$ as in \eqref{eq:set_vGNE}}, is a \gls{l-SE} of the hierarchical game in \eqref{eq:single_prob}--\eqref{eq:single_prob_leader} if there exist open neighborhoods $\mathcal{O}_{y_0^{*}}$ and $\mathcal{O}_{\bs{x}^{*}}$ of $y_0^{*}$ and $\bs{x}^{*}$, respectively, such that
	$$
		J_0(y_0^{*}, \bs{x}^{*}) \leq \underset{\blue{(y_0, \bs{x}) \in \textrm{gph}(\mc{S}) \cap \mathcal{O}}}{\mathrm{inf}} J_0(y_0, \bs{x}),
	$$
	\blue{where $\mathcal{O} \coloneqq (\mc{Y}_0 \cap \mathcal{O}_{y_0^{*}}) \times \mathcal{O}_{\bs{x}^{*}}$.}
	\hfill$\square$
\end{definition}
\smallskip
Informally speaking, at an \gls{l-SE}, the leader and the followers locally fulfill the set of mutually coupling constraints and none of them can gain by unilaterally deviating from their current strategy.
%Thus, according to Definition~\ref{def:S-EQ}, a locally optimal solution to \eqref{eq:single_prob_leader} corresponds to an \gls{l-SE}. 
Note that we refer to an SE if Definition~\ref{def:S-EQ} holds true \blue{with $\mathcal{O} = \mc{Y}_0 \times \R^{n}$, i.e.,  $\mc{O}_{y_0^{*}} = \R^{n_0}$ and $\mc{O}_{\bs{x}^{*}} = \R^{n}$, thus coinciding with \cite[Def.~1.1]{kulkarni2015existence}}.

\subsection{Aggregative game formulation}
For computational purposes, we consider the cost function of the followers and leader to be in aggregative form, i.e.,
\begin{equation}\label{eq:cost_functions}
	\begin{aligned}
		&J_{i} \coloneqq \tfrac{1}{2} x_i^\top Q_i x_i + \left(\tfrac{1}{N} \textstyle\sum_{j \in \mc{I}} C_{i,j} x_j + C_{i,0} y_0\right)^{\top} \! x_{i}, \; \forall i \in \mathcal{I},\\ 
		&J_{0} \coloneqq f_{0}\left(y_{0}\right)+\left( \textstyle\sum_{i \in \mc{I}} f_{0,i}(x_i) \right)^{\top} \! y_{0},
	\end{aligned}
\end{equation}
where $Q_i$ $\blue{\succcurlyeq }$ $0$, $C_{i,j} \in \R^{n_i \times n_j}$, and $C_{i,0} \in \R^{n_i \times n_0}$. 
%Note that each term $Q_i + \tfrac{1}{N} C_{i,i}$ and $f_{0} : \R^{n_0} \to \R$ and $f_{0,i} : \R^{n_i} \to \R^{n_0}$ shall be chosen to satisfy Standing Assumption~\ref{ass:continuity}.
In view of Standing Assumption~\ref{ass:C_1}, given any feasible $y_0 \in \mc{Y}_0$, it follows from \cite[Th.~3.1]{facchinei2007gen} that a set of strategies is a \gls{v-GNE} of the followers game in \eqref{eq:single_prob} if and only if the following coupled KKT conditions hold true:
$$
\left\{\begin{array}{l}{\nabla_{x_{i}} J_{i}\left( y_{0},x_{i}, \bs{x}_{-i}\right)+A_{i}^{\top} \lambda +F_{i}^{\top} \lambda_i = 0}, \; \forall i \in \mathcal{I}, \\ 
{0 \leq \lambda \perp-(A \bs{x}-b) \geq 0},\\
{0 \leq \blue{\lambda_i} \perp-(F_i x_i-g_i) \geq 0}, \; \forall i \in \mathcal{I},
\end{array} \right.
$$
which, in our aggregative setup, can be compactly rewritten as
\begin{equation}\label{eq:KKT_followers}
	\left\{\begin{array}{l} 
	Q \bs{x} + C y_0 +A^{\top} \lambda + F^\top \bs{\lambda} = 0, \\ 
	{0 \leq \lambda \perp-(A \bs{x}-b) \geq 0},\\
	{0 \leq \blue{\lambda_i} \perp-(F_i x_i-g_i) \geq 0}, \; \forall i \in \mc{I},
	\end{array} \right.
\end{equation}
where $F \coloneqq \diag(\{F_i\}_{i \in \mc{I}})$, $\lambda \in \mathbb{R}_{ \geq 0}^{m}$ is the dual variable associated with $A \bs{x} \leq b$, $\lambda_i \in \mathbb{R}_{ \geq 0}^{p_i}$ is the (local) dual variable associated with the local constraints defining $\mc{X}_i$, $\bs{\lambda} \coloneqq \col(\left\{\lambda_i\right\}_{i \in \mc{I}})$, and
$$
Q \coloneqq \left[
\begin{array}{ccc}
Q_1 + \tfrac{1}{N}C_{1,1} & \cdots & \tfrac{1}{N}C_{1,N}\\
\vdots & \ddots & \vdots\\
\tfrac{1}{N}C_{N,1}  & \cdots & Q_N + \tfrac{1}{N} C_{N,N}
\end{array}
\right], \,
C \coloneqq \left[
\begin{array}{c}
C_{10}\\
\vdots\\
C_{N0}
\end{array}
\right].
$$

Finally, by substituting back the KKT conditions in \eqref{eq:KKT_followers} into the optimization problem of the leader in \eqref{eq:single_prob_leader}, the problem of finding an SE of the hierarchical game in \eqref{eq:single_prob}--\eqref{eq:single_prob_leader} can be equivalently written as

%Since the KKT conditions in \eqref{eq:KKT_followers} are necessary and sufficient for solving the followers game, they can be substituted back into the optimization problem of the leader in \eqref{eq:single_prob_leader}. Thus, the problem of finding a Stackelberg equilibrium of the hierarchical game in \eqref{eq:single_prob}--\eqref{eq:single_prob_leader} can be equivalently written as
\begin{equation}\label{KKT_into_leader}
\left\{
\begin{aligned}
&\underset{y_0, \bs{x}, \lambda, \bs{\lambda}}{\textrm{min}} & & J_0(y_0, \bs{x})\\
&\hspace{.32cm}\textrm{ s.t. } & & Q \bs{x} + C y_0 +A^{\top} \lambda + F^\top \bs{\lambda} = 0,\\
%&&& \\
&&& 0 \leq \lambda_i \perp-(F_i x_i-g_i) \geq 0, \; \forall i \in \mc{I},\\
&&& 0 \leq \lambda \perp-(A \bs{x}-b) \geq 0, \; y_0 \in \mc{Y}_0.
%&&& 
\end{aligned}	
\right.
\end{equation}

\subsection{Complementarity constraints relaxation}
We note that the leader nonconvex optimization problem in \eqref{KKT_into_leader} is an \gls{MPCC} and, in general, it does not satisfy any standard constraint qualification. 
%Moreover, it conceals intrinsic nonconvexities, e.g., the bilinear terms of $J_0 (y_0, \bs{x})$ in \eqref{eq:cost_functions} and the complementarity constraints $0 \leq \lambda \perp-(A \bs{x}-b) \geq 0$ and $0 \leq \lambda_i \perp-(F_i x_i-g_i) \geq 0$. 
Therefore, we propose to study a regularized version by introducing slack variables $\mu \in \mathbb{R}_{ \geq 0}^{m}$ and $\mu_i \in \mathbb{R}_{ \geq 0}^{p_i}$, $i \in \mc{I}$, together with parameters  $\theta, \, \theta_i > 0$, $i \in \mc{I}$, which enable us to replace the complementarity constraints in \eqref{KKT_into_leader} with the nonlinear constraints $\lambda^\top \mu \leq \theta$ and $\lambda_i^\top \mu_i \leq \theta_i$, for all $i \in \mc{I}$ \cite{scholtes2001convergence}.
%that allow one to relax the complementarity constraints in \eqref{KKT_into_leader} as nonlinear constraints. 
Thus, after defining $\nu \coloneqq \col(\lambda, \mu) \in \R^{2m}$, $\nu_i \coloneqq \col(\lambda_i, \mu_i) \in \R^{2p_i}$, $\bs{y} \coloneqq \col(\bs{x}, \{\nu_i\}_{i \in \mc{I}})$, the regularized version of \eqref{KKT_into_leader} reads as:
\begin{equation}\label{eq:KKT_into_leader_relaxed}
R(\theta) : \left\{
\begin{aligned}
&\underset{y_0, \bs{y}, \nu}{\textrm{min}} & & J_0(y_0, \bs{x})\\
&\hspace{.15cm}\textrm{ s.t. } & & A_\textrm{f} \, \bs{y} + A_\ell \, y_0 + A_\textrm{c} \, \nu = d,\\
&&&   \lambda_i^\top \mu_i \leq \theta_i, \lambda_i, \mu_i \geq 0, \; \forall i \in \mc{I},\\
&&& \lambda^\top \mu \leq \theta, \lambda, \mu \geq 0, \; y_0 \in \mc{Y}_0,
\end{aligned}	
\right.
\end{equation}
where $d \coloneqq \col(0, b, g)$, $g \coloneqq \col(\{g_i\}_{i \in \mc{I}})$, $A_\ell \coloneqq \col(C, 0, 0)$, and
\begin{align*}
A_{\textrm{f}} \coloneqq \left[
\begin{array}{cc}
Q & \{[F^\top_i \;\; 0 ]\}_{i \in \mc{I}}\\
A & 0\\
F & \left[0 \;\; I\right] \otimes \bsone
\end{array}
\right], \; A_{\textrm{c}} \coloneqq \left[
\begin{array}{cc}
A^\top & 0\\
0 & I\\
0 & 0
\end{array}
\right].
\end{align*}

For any given $\theta, \, \theta_i > 0$, $i \in \mc{I}$, let us now introduce the sets 
\begin{equation}\label{eq:nonconvex_sets}
	\begin{aligned}
		\mc{C}(\theta) &\coloneqq \{ \nu \in \R^{2m}_{\geq 0} \mid \tfrac{1}{2} \nu^\top P \nu \leq \theta  \},\\ 
		\mc{C}_i(\theta_i) &\coloneqq \{ \nu_i \in \R^{2p_i}_{\geq 0} \mid \tfrac{1}{2} \nu_i^\top P_i \nu_i \leq \theta_i \}, \; \forall i \in \mc{I}.
	\end{aligned}
\end{equation}
Here, each $P$ and $P_i$, $i \in \mc{I}$, is a symmetric matrix with identities of suitable dimension on the anti-diagonal.
Furthermore, we define $\Omega(\theta) \coloneqq \mc{Y}_0 \times \mc{Y} \times \mc{C}(\theta)$, where for brevity we omit the dependency from $\theta_i$, explicated in $\mc{Y} \coloneqq \mc{X} \times \prod_{i \in \mc{I}} \mc{C}_i(\theta_i)$. Finally, by introducing $\bs{\omega} \coloneqq \col(y_0, \bs{y}, \nu)$ and $A_\omega \coloneqq [A_{\ell} \; A_\textrm{f} \;  A_\textrm{c}]$, the closed, nonconvex feasible set of $R(\theta)$ in \eqref{eq:KKT_into_leader_relaxed} reads as
\begin{equation}\label{eq:feasible_set}
	\mc{R}(\theta) \coloneqq \{\bs{\omega} \in \Omega(\theta) \mid A_{\omega} \, \bs{\omega} - d = 0\}.
\end{equation}

\blue{
	We recall now the notion of \gls{MPCC}-\gls{LICQ} for the \gls{MPCC} in \eqref{KKT_into_leader}, which is characterized by the result stated immediately below.
	\smallskip
	\begin{definition}
		The \gls{MPCC} in \eqref{KKT_into_leader} satisfies the \gls{MPCC}-\gls{LICQ} at $\tilde{\bs{\omega}} \in \mc{R}(0)$ if $R(0)$ in \eqref{eq:KKT_into_leader_relaxed} satisfies the \gls{LICQ} at $\tilde{\bs{\omega}}$.
		\hfill$\square$
	\end{definition}
}
\smallskip
\begin{lemma}(\hspace{-.01cm}\cite[Lemma~2.1]{scholtes2001convergence})\label{lemma:scholtes}
	Let $\tilde{\bs{\omega}} \in \mc{R}(0)$. If $\tilde{\bs{\omega}}$ satisfies the \gls{MPCC}-\gls{LICQ} for the \gls{MPCC} in \eqref{KKT_into_leader}, then there exists an open neighborhood $\mc{O}$ of $\tilde{\bs{\omega}}$ and scalars $\tilde{\theta}$, $\tilde{\theta}_i > 0$, for all $i \in \mc{I}$, such that, for every $\theta \in (0, \tilde{\theta})$ and $\theta_i \in (0, \tilde{\theta}_i)$, for all $i \in \mc{I}$, the \gls{LICQ} holds true at every point $\bs{\omega} \in \mc{O}$ of $R(\theta)$.
	\hfill$\square$
\end{lemma}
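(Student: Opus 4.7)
The plan is to reduce the claim to a standard perturbation statement about ranks of Jacobians: if \gls{LICQ} holds at $\tilde{\bs{\omega}}$ for $R(0)$, then, by continuity of the constraint gradients in the base point, the same full-row-rank property survives at every feasible point of $R(\theta)$ sufficiently close to $\tilde{\bs{\omega}}$, provided that $\theta$ and all the $\theta_i$ are small enough.

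First I would enumerate the active constraints of $R(0)$ at $\tilde{\bs{\omega}}$: the linear equality $A_{\omega}\,\bs{\omega}=d$; the complementarity inequalities $\lambda^{\top}\mu\leq 0$ and $\lambda_i^{\top}\mu_i\leq 0$, $i\in\mc{I}$ (all active with equality by nonnegativity); those bound constraints $\lambda_j\geq 0$, $\mu_j\geq 0$, and analogously for $\lambda_i,\mu_i$, whose left-hand side vanishes at $\tilde{\bs{\omega}}$; and the active face, if any, of $\mc{Y}_0$. By hypothesis, the gradients of this collection are linearly independent. I would then pick an arbitrary $\bs{\omega}\in\mc{R}(\theta)$ near $\tilde{\bs{\omega}}$ and compare its active set with that at $\tilde{\bs{\omega}}$. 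Two elementary observations make this step transparent: (i) every bound constraint that is strictly satisfied at $\tilde{\bs{\omega}}$ is still strictly satisfied throughout a small open neighborhood, so the active bound set at $\bs{\omega}$ is a subset of the one at $\tilde{\bs{\omega}}$; (ii) the gradients of the linear equality and of all bound constraints do not depend on the base point, while the gradients of the relaxed complementarity constraints are simply $(\mu,\lambda)$ and $(\mu_i,\lambda_i)$, which depend continuously on $\bs{\omega}$ and not on $\theta,\theta_i$.

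Combining (i)--(ii), the active Jacobian at $\bs{\omega}$ for $R(\theta)$ is obtained from the active Jacobian at $\tilde{\bs{\omega}}$ for $R(0)$ by first deleting the rows corresponding to bounds that have become inactive and then replacing the complementarity rows, if still active, by arbitrarily small continuous perturbations. Full row rank is preserved under row deletion and under sufficiently small entrywise perturbations---the latter via continuity of the smallest singular value of a fixed-size matrix, which is strictly positive at the base point---so there exist an open neighborhood $\mc{O}$ of $\tilde{\bs{\omega}}$ and thresholds $\tilde{\theta},\tilde{\theta}_i>0$ for which \gls{LICQ} of $R(\theta)$ holds at every $\bs{\omega}\in\mc{R}(\theta)\cap\mc{O}$. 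The only delicate step will be the treatment of biactive indices, where both $\tilde{\lambda}_j$ and $\tilde{\mu}_j$ vanish: at such indices the complementarity gradient has a zero block, but \gls{MPCC}-\gls{LICQ} at $\tilde{\bs{\omega}}$ already includes the two corresponding standard-basis gradients of $\lambda_j\geq 0$ and $\mu_j\geq 0$ in the active set, which is exactly what makes the perturbation argument robust without imposing strict complementarity.
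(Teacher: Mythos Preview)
The paper does not supply its own proof of this lemma; it is stated as a direct citation of \cite[Lemma~2.1]{scholtes2001convergence} and closed with a $\square$. So there is nothing in the paper to compare your argument against beyond the reference itself.

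Your perturbation argument is the standard route to that result and is essentially correct: the active gradients of $R(\theta)$ at a nearby feasible $\bs{\omega}$ are obtained from those of $R(0)$ at $\tilde{\bs{\omega}}$ by (a) dropping rows corresponding to bounds that have become strictly positive and (b) replacing each surviving complementarity row $(\tilde{\mu},\tilde{\lambda})$ by the nearby $(\mu,\lambda)$, with the equality and bound rows unchanged; full row rank survives both operations. Your remark about biactive indices is the right observation, and in fact your argument shows the neighborhood $\mc{O}$ can be chosen independently of the particular $\theta,\theta_i$ in the stated ranges, which is slightly more than the lemma asserts. The only loose end is the handling of $\mc{Y}_0$: in the paper it is merely closed and convex (Standing Assumption~\ref{ass:Y_0}), not given by smooth inequalities, so speaking of its ``active face'' and of \gls{LICQ} there is informal; this does not affect the core of the proof, since the novel part concerns only the relaxed complementarity blocks.
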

\smallskip

\blue{Then, let us introduce the following fundamental assumption.}
\smallskip
\begin{standing}\label{standing:existence_of_point}
	There exists some $\tilde{\bs{\omega}} \in \mc{R}(0)$ that satisfies the \gls{MPCC}-\gls{LICQ} for the \gls{MPCC} in \eqref{KKT_into_leader}. The regularization parameters are chosen so that $\theta \in (0, \tilde{\theta})$ and $\theta_i \in (0, \tilde{\theta}_i)$, for all $i \in \mc{I}$.
	\hfill$\square$
\end{standing}
\smallskip

In view of Standing Assumption~\ref{standing:existence_of_point}, there exists a neighborhood such that $R(\theta)$ locally satisfies the \gls{LICQ}. As \blue{shown} in \S \ref{subsec:tradeoff}, the coefficients $\theta$, $\theta_i$, $i \in \mc{I}$, play a trade-off role between the distance from a \gls{v-GNE} for the followers and a lower cost for the leader. To conclude \blue{the section}, we stress that an optimal solution to \eqref{eq:KKT_into_leader_relaxed}\blue{, whose existence follows by its local  \gls{LICQ} and the coerciveness of $J_0$,} generates a pair $(y^\ast_0, \bs{x}^\ast)$ that corresponds to an \gls{l-SE} of the original hierarchical game in \eqref{eq:single_prob}--\eqref{eq:single_prob_leader}.
%, according to Definition~\ref{def:S-EQ}.

%%%%%%%%%%%%%%%%%%%%%%%%%%%%%%%%%%%%%%%%%%%%%%%%%%%%%%%%%%%%%%%%%%%%%%%%
\section{Local Stackelberg equilibrium seeking\\ via sequential convex approximation}
\subsection{A two-layer algorithm}
In the spirit of \cite{scutari2014decomposition,scutari2017parallel}, we then investigate how to solve \eqref{eq:KKT_into_leader_relaxed} in a decentralized fashion by means of a two-layer algorithm, while preserving the hierarchical structure of the game \eqref{eq:single_prob}--\eqref{eq:single_prob_leader}. 
First, we linearize the nonlinear terms appearing in the cost function around some $\bar{\bs{\omega}} \in \mc{R}(\theta)$. 
Specifically, with $\bs{\bs{\varphi}} \coloneqq (y_0, \bs{x})$, $J_0$ is linearized by following a first order Taylor expansion as $ J_0 (\bs{\varphi}) \simeq J_0 (\bar{\bs{\varphi}}) + \nabla^\top J_0 (\bar{\bs{\varphi}}) \left(\bs{\varphi} - \bar{\bs{\varphi}}\right)$ where, for our aggregative game, we have:
$$ 
	\begin{aligned}
		\nabla J_0(\bs{\bs{\varphi}}) &\!=\! \col(\nabla_{y_0} f_0(y_0) \!+\! \textstyle \sum_{j \in \mc{I}} f_{0,j}(x_j), \! \{\nabla_{x_{j}} f_{0,j}(x_j)\blue{^\top \! y_0}\}_{j \in \mc{I}})\\ 
		&\!\eqqcolon\! \col(c_{\ell}(y_0, \bs{x}), c_{\textrm{f}}(\blue{y_0, } \bs{x})).
	\end{aligned}
$$ 

According to \cite[\S III.A]{scutari2017parallel}, for the nonlinear constraints \blue{defining} the sets in \eqref{eq:nonconvex_sets}, we compute an upper approximation by observing that, e.g., $\blue{\tfrac{1}{2} \nu^\top P \nu} = \lambda^\top \mu = \tfrac{1}{2} (\lambda + \mu)^\top(\lambda + \mu) - \tfrac{1}{2} (\lambda^\top\lambda + \mu^\top\mu)$. Thus, after linearizing the concave term around some $\bar{\nu} \in \mc{C}(\theta)$, we define
$$
	\tilde{\mc{C}}(\theta; \bar{\bs{\omega}}) \coloneqq \{\nu \in \R^{2m}_{\geq 0} \mid \tfrac{1}{2} (\bsone^\top \nu)^\top(\bsone^\top \nu) - \bar{\nu}^\top \nu + \tfrac{1}{2} \bar{\nu}^\top\bar{\nu}  \leq \theta \}.
$$
The same procedure can be applied to each $\mc{C}_i(\theta_i)$ to obtain $\tilde{\mc{C}}_i(\theta_i; \bar{\bs{\omega}})$.
Accordingly, $\Omega(\theta)$ is approximated by $\tilde{\Omega}(\theta; \bar{\bs{\omega}}) \coloneqq \mc{Y}_0 \times \tilde{\mc{Y}}(\bar{\bs{\omega}}) \times  \tilde{\mc{C}}(\theta; \bar{\bs{\omega}})$, with $\tilde{\mc{Y}}(\bar{\bs{\omega}}) \coloneqq \mc{X} \times \prod_{i \in \mc{I}} \tilde{\mc{C}}_i(\theta_i; \bar{\bs{\omega}})$, while $\mc{R}(\theta)$ by
\begin{equation}\label{eq:convex_feasible_set}
	\tilde{\mc{R}}(\theta; \bar{\bs{\omega}}) \coloneqq \{\bs{\omega} \in \tilde{\Omega}(\theta; \bar{\bs{\omega}}) \mid A_{\omega} \, \bs{\omega} - d = 0\}.
\end{equation}

Finally, by discarding constant terms and introducing $c_{\omega}(\bar{\bs{\omega}}) \coloneqq \col(\nabla J_0 (\bar{\bs{\varphi}}),0)$, the convexified version of $R(\theta)$ in \eqref{eq:KKT_into_leader_relaxed} reads as
\begin{equation}\label{eq:KKT_into_leader_relaxed_compact}
\tilde{R}\left(\theta; \bar{\bs{\omega}}\right) : \left\{
\begin{aligned}
&\underset{\blue{\bs{\omega} \in \tilde{\Omega}(\theta; \bar{\bs{\omega}})}}{\textrm{min}} & & c_{\omega}(\bar{\bs{\omega}})^\top \bs{\omega} + \frac{\sigma}{2} \| \bs{\omega} - \bar{\bs{\omega}} \|^2\\
&\hspace{.35cm}\textrm{ s.t. } & & A_{\omega} \, \bs{\omega} = d,
\end{aligned}	
\right.
\end{equation}
where we add a ``proximal-like'' term to the linearized cost function in \eqref{eq:KKT_into_leader_relaxed} with $\sigma > 0$. Hence, the cost function in \eqref{eq:KKT_into_leader_relaxed_compact}, namely $\tilde{J}_0(\bs{\omega}; \bar{\bs{\omega}}) \coloneqq c_{\omega}(\bar{\bs{\omega}})^\top \bs{\omega} + \frac{\sigma}{2} \| \bs{\omega} - \bar{\bs{\omega}} \|^2$, is characterized as follows.
\smallskip
\begin{lemma}\label{lemma:basic_J_0}
	The following statements hold true:
	\begin{itemize}
		\item[(i)] Given any $\bar{\bs{\omega}} \in \mc{R}(\theta)$, $\tilde{J}_0(\cdot \,; \bar{\bs{\omega}})$ is uniformly strongly convex on $\Phi \times \R^{2(m+p)}_{\geq 0}$, $p \coloneqq \sum_{i \in \mc{I}} p_i$, with coefficient $\sigma$;
		
		\item[(ii)] Given any $\bs{\omega} \in \mc{R}(\theta)$, $\nabla \tilde{J}_0(\bs{\omega}; \cdot )$ is uniformly Lipschitz continuous on $\mc{R}(\theta)$ with coefficient $\tilde{\kappa}_0 \coloneqq \kappa_0 + \sigma$.
	\end{itemize}
	\hfill$\square$
\end{lemma}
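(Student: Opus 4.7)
The plan is to handle the two parts separately, as both reduce to direct calculations based on the specific structure of $\tilde{J}_0(\bs{\omega}; \bar{\bs{\omega}}) = c_{\omega}(\bar{\bs{\omega}})^\top \bs{\omega} + \tfrac{\sigma}{2} \| \bs{\omega} - \bar{\bs{\omega}} \|^2$.

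For part (i), I would observe that, once $\bar{\bs{\omega}}$ is fixed, the first summand is linear in $\bs{\omega}$ and the second summand is a quadratic form $\tfrac{\sigma}{2}\|\cdot - \bar{\bs{\omega}}\|^2$. Computing the Hessian with respect to $\bs{\omega}$ therefore gives $\nabla^2_{\bs{\omega}} \tilde{J}_0(\bs{\omega}; \bar{\bs{\omega}}) = \sigma I$, whose smallest eigenvalue is $\sigma > 0$. By standard second-order characterizations of strong convexity on convex sets, this yields uniform $\sigma$-strong convexity of $\tilde{J}_0(\cdot \,; \bar{\bs{\omega}})$ on the convex domain $\Phi \times \R^{2(m+p)}_{\geq 0}$; the bound is uniform in $\bar{\bs{\omega}}$ since the Hessian does not depend on it.

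For part (ii), I would fix $\bs{\omega} \in \mc{R}(\theta)$ and view $\bar{\bs{\omega}} \mapsto \nabla_{\bs{\omega}} \tilde{J}_0(\bs{\omega}; \bar{\bs{\omega}}) = c_{\omega}(\bar{\bs{\omega}}) + \sigma(\bs{\omega} - \bar{\bs{\omega}})$ as a function of $\bar{\bs{\omega}}$ alone. Taking two reference points $\bar{\bs{\omega}}_1, \bar{\bs{\omega}}_2 \in \mc{R}(\theta)$ and applying the triangle inequality,
\begin{equation*}
\| \nabla_{\bs{\omega}} \tilde{J}_0(\bs{\omega}; \bar{\bs{\omega}}_1) - \nabla_{\bs{\omega}} \tilde{J}_0(\bs{\omega}; \bar{\bs{\omega}}_2) \|
\leq \| c_{\omega}(\bar{\bs{\omega}}_1) - c_{\omega}(\bar{\bs{\omega}}_2) \| + \sigma \| \bar{\bs{\omega}}_1 - \bar{\bs{\omega}}_2 \|.
\end{equation*}
Since $c_{\omega}(\bar{\bs{\omega}}) = \col(\nabla J_0(\bar{\bs{\varphi}}),0)$ with $\bar{\bs{\varphi}} = (\bar{y}_0, \bar{\bs{x}})$ being a sub-vector of $\bar{\bs{\omega}}$, Standing Assumption~\ref{ass:continuity} yields $\| c_{\omega}(\bar{\bs{\omega}}_1) - c_{\omega}(\bar{\bs{\omega}}_2) \| = \| \nabla J_0(\bar{\bs{\varphi}}_1) - \nabla J_0(\bar{\bs{\varphi}}_2) \| \leq \kappa_0 \| \bar{\bs{\varphi}}_1 - \bar{\bs{\varphi}}_2 \| \leq \kappa_0 \| \bar{\bs{\omega}}_1 - \bar{\bs{\omega}}_2 \|$, so the combined constant is $\tilde{\kappa}_0 = \kappa_0 + \sigma$, uniformly in $\bs{\omega}$.

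There is no substantive obstacle here: both claims follow from elementary calculus together with Standing Assumption~\ref{ass:continuity}. The only minor care needed is in verifying that the Lipschitz estimate on the nontrivial block $c_{\omega}(\cdot)$ transfers from the $\Phi$-norm of $\bar{\bs{\varphi}}$ to the full norm of $\bar{\bs{\omega}}$, which is immediate because extracting a sub-vector is $1$-Lipschitz and the complementary block in $c_{\omega}$ is identically zero.
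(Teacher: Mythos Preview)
Your proposal is correct and mirrors the paper's own argument almost exactly: part~(i) is handled in the paper by invoking the definition of uniform strong convexity (your Hessian computation $\nabla^2_{\bs{\omega}}\tilde{J}_0 = \sigma I$ is an equivalent route), and part~(ii) in the paper proceeds via the same triangle-inequality estimate on $c_\omega(\cdot)$ combined with Standing Assumption~\ref{ass:continuity} and the sub-vector bound $\|\bar{\bs{\varphi}}_1-\bar{\bs{\varphi}}_2\|\le\|\bar{\bs{\omega}}_1-\bar{\bs{\omega}}_2\|$.
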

\begin{proof}
	$\textrm{(i)}$ The statement directly follows by applying the definition of uniform strong convexity on the set $\Phi \times \R^{2(m+p)}_{\geq 0}$.
%	, i.e., given any $\bs{\omega}_1, \bs{\omega}_2 \in \mc{R}(\theta) \in \Phi \times \R^{2(m+p)}_{\geq 0}$, we have
%	$$
%	(\bs{\omega}_1 - \bs{\omega}_2)^\top (\nabla \tilde{J}_0(\bs{\omega}_1; \bar{\bs{\omega}}) - \nabla \tilde{J}_0(\bs{\omega}_2; \bar{\bs{\omega}}))
%	$$

	$\textrm{(ii)}$ Let $\bs{\omega}_1, \bs{\omega}_2 \in \mc{R}(\theta)$. For any given $\bs{\omega} \in \mc{R}(\theta)$, we have:
	$$
	\begin{aligned}
	\| \nabla \tilde{J}_0(\bs{\omega}; \bs{\omega}_1)& \!-\!  \nabla \tilde{J}_0(\bs{\omega}; \bs{\omega}_2) \| \!=\! \|c_{\omega}(\bs{\omega}_1) \!-\! c_{\omega}(\bs{\omega}_2) \!+\! \sigma (\bs{\omega}_2 \!-\! \bs{\omega}_1) \|\\
	&\hspace{-.6cm}\leq \| \col(\nabla J_0(\bs{\varphi}_1),0) \!-\! \col(\nabla J_0(\bs{\varphi}_2),0) \| \!+\! \sigma \|\bs{\omega}_1 - \bs{\omega}_2\| \\
	&\hspace{-.6cm}\leq (\kappa_0 + \sigma) \, \|\bs{\omega}_1 - \bs{\omega}_2\|.
	\end{aligned}
	$$
\end{proof}

\smallskip
\begin{remark}
	According to the structure of the vector $\bs{\omega}$, the coefficient $\sigma$ may be replaced with locally defined $\sigma_0, \, \sigma_\textrm{c}, \, \sigma_i > 0$, $i \in \mc{I}$, without affecting the results given in the remainder, see \cite[\S III.A]{scutari2014decomposition}. For simplicity, we adopt a unique, globally known parameter $\sigma$.
	\hfill$\square$
\end{remark}
\smallskip

Thus, given any $\bar{\bs{\omega}} \in \mc{R}(\theta)$, $\tilde{R}\left(\theta; \bar{\bs{\omega}}\right)$ in \eqref{eq:KKT_into_leader_relaxed_compact} admits a unique optimal solution associated with the mapping $\hat{\bs{\omega}} : \R^{s} \to \R^{s}$, with $s \coloneqq n_0 + n + 2(p + m)$, defined as follows:
\begin{equation}\label{eq:mapping}
	\hat{\bs{\omega}}(\bar{\bs{\omega}}) \coloneqq \underset{\bs{\omega} \in \tilde{\mc{R}}(\theta; \bar{\bs{\omega}})}{\textrm{argmin}} \; \tilde{J}_0(\bs{\omega}; \bar{\bs{\omega}}) .
\end{equation}

For computing an $\ell$-SE, we propose the iterative procedure summarized in Algorithm~\ref{alg:two_layers}, which is composed of two main loops and resorts on the so called SCA method. Specifically, once fixed the coefficients $\theta,\, \theta_i > 0$, for all $i \in \mc{I}$, at each iteration $k \in \N$, the outer loop is in charge of providing a feasible set of strategies $\bs{\omega}^k$, which are used to convexify $R(\theta)$ (\texttt{S1}). Then, after solving the inner loop by computing the optimal solution $\hat{\bs{\omega}}^{k} \coloneqq \hat{\bs{\omega}}(\bs{\omega}^k)$ to $\tilde{R}(\theta; \bs{\omega}^k)$ (\texttt{S2}), the outer loop updates the strategies $\bs{\omega}^{k+1}$ (\texttt{S3}) to find a new approximation $\tilde{R}(\theta; \bs{\omega}^{k+1})$, and the procedure repeats until a certain stopping criterion is met.
\begin{algorithm}[!t]
	\caption{Two-layer \gls{SCA} computation of \gls{l-SE}}\label{alg:two_layers}
	\DontPrintSemicolon
	\SetArgSty{}
	\SetKwFor{ForAll}{for all}{do}{end forall}
%	\textbf{Set:} .\\
%	\smallskip
	\textbf{Initialization:} $\bs{\omega}^0 \in \mc{R}(\theta)$,  $\alpha > 0$\\
%	\begin{itemize}
%		\item[] 
%	\end{itemize}
	\smallskip
	\textbf{Iteration $(k \in \N)$:} \\
	\begin{itemize}\setlength{\itemindent}{.75cm}
	\item[(\texttt{S1})] Convexify $R(\theta)$ to obtain $\tilde{R}(\theta; \bs{\omega}^{k})$ as in \eqref{eq:KKT_into_leader_relaxed_compact}\\
	\smallskip
	\item[(\texttt{S2})] Compute $\hat{\bs{\omega}}^{k}$, solution to $\tilde{R}(\theta; \bs{\omega}^{k})$\\
	\smallskip
	\item[(\texttt{S3})] Update $\bs{\omega}^{k+1} = (1 - \alpha) \bs{\omega}^{k} + \alpha \hat{\bs{\omega}}^{k}$
	\end{itemize}
\end{algorithm}
\subsection{Convergence analysis}
First, we characterize the sequence $(\bs{\omega}^k)_{k \in \N}$ generated by Algorithm~\ref{alg:two_layers} in terms of iterate feasibility. Then, we establish a key property of the mapping $\hat{\bs{\omega}}(\cdot)$, and finally we prove that $(\bs{\omega}^k)_{k \in \N}$ converges to an optimal solution to \eqref{eq:KKT_into_leader_relaxed}, generating an \gls{l-SE} of the hierarchical aggregative game \eqref{eq:single_prob}--\eqref{eq:single_prob_leader}, according to Definition~\ref{def:S-EQ}.
\smallskip
\begin{lemma}\label{lemma:iterate_feas}
	The following inclusions hold true:
	\begin{itemize}
		\item[(i)] $\tilde{\mc{R}}(\theta; \bar{\bs{\omega}}) \subseteq \mc{R}(\theta)$, for all $\bar{\bs{\omega}} \in \mc{R}(\theta)$;
		
		\item[(ii)] $\bs{\omega}^k \in \mc{R}(\theta)$.
	\end{itemize}
	\hfill$\square$	
\end{lemma}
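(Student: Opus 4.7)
The strategy is to exploit the standard SCA trick that the concave part of each complementarity constraint is linearized at $\bar{\bs{\omega}}$ to produce an \emph{upper} approximation of the original nonconvex constraint; this gives an inner approximation of the feasible set, and evaluating that approximation at $\bar{\bs{\omega}}$ itself returns the original constraint value. With (i) established, (ii) follows by a short induction combined with convexity of the approximate feasible set.

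For part (i), I would fix $\bar{\bs{\omega}} \in \mc{R}(\theta)$ and show $\tilde{\mc{C}}(\theta;\bar{\bs{\omega}}) \subseteq \mc{C}(\theta)$ (the containment $\tilde{\mc{C}}_i(\theta_i;\bar{\bs{\omega}}) \subseteq \mc{C}_i(\theta_i)$ is completely analogous, and all other defining conditions of $\tilde{\mc{R}}$ and $\mc{R}$ coincide). The key identity used in the construction of $\tilde{\mc{C}}$ is
$$
\lambda^\top \mu \;=\; \tfrac{1}{2}\|\lambda+\mu\|^2 - \tfrac{1}{2}\|\nu\|^2,
$$
with $\nu=\col(\lambda,\mu)$. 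Since $\nu\mapsto\tfrac{1}{2}\|\nu\|^2$ is convex, its gradient inequality at $\bar{\nu}$ yields $\tfrac{1}{2}\|\nu\|^2 \geq \bar{\nu}^\top\nu - \tfrac{1}{2}\|\bar{\nu}\|^2$, so that
$$
\lambda^\top\mu \;\leq\; \tfrac{1}{2}\|\lambda+\mu\|^2 - \bar{\nu}^\top\nu + \tfrac{1}{2}\|\bar{\nu}\|^2.
$$
Any $\nu \in \tilde{\mc{C}}(\theta;\bar{\bs{\omega}})$ makes the right-hand side $\leq\theta$, hence $\lambda^\top\mu \leq \theta$, i.e., $\nu\in\mc{C}(\theta)$. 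Combining with the (unchanged) equality $A_\omega\bs\omega=d$, the local constraints $\mc{Y}_0$ and $\mc{X}$, and the nonnegativity conditions gives $\tilde{\mc{R}}(\theta;\bar{\bs{\omega}})\subseteq\mc{R}(\theta)$.

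For part (ii), I would proceed by induction on $k\in\N$. The base case $\bs{\omega}^0\in\mc{R}(\theta)$ is the initialization of Algorithm~\ref{alg:two_layers}. For the inductive step, assume $\bs{\omega}^k \in \mc{R}(\theta)$. I would first verify the auxiliary fact that $\bs{\omega}^k \in \tilde{\mc{R}}(\theta;\bs{\omega}^k)$: plugging $\nu=\bar{\nu}=\nu^k$ in the defining inequality of $\tilde{\mc{C}}(\theta;\bs{\omega}^k)$ collapses it exactly to $(\lambda^k)^\top \mu^k \leq \theta$, which holds since $\bs{\omega}^k \in \mc{R}(\theta)$; the same calculation works for each $\tilde{\mc{C}}_i(\theta_i;\bs{\omega}^k)$, while the linear equality and the box constraints are identical. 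Next, by construction $\hat{\bs{\omega}}^k = \hat{\bs{\omega}}(\bs{\omega}^k) \in \tilde{\mc{R}}(\theta;\bs{\omega}^k)$. Since $\tilde{\mc{R}}(\theta;\bs{\omega}^k)$ is convex (a polyhedron intersected with sublevel sets of convex quadratics), for any stepsize $\alpha \in (0,1]$ the update $\bs{\omega}^{k+1}=(1-\alpha)\bs{\omega}^k+\alpha\hat{\bs{\omega}}^k$ lies in $\tilde{\mc{R}}(\theta;\bs{\omega}^k)$, and then part (i) yields $\bs{\omega}^{k+1} \in \mc{R}(\theta)$.

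The only genuinely delicate point is the verification of $\bs{\omega}^k \in \tilde{\mc{R}}(\theta;\bs{\omega}^k)$, because this is the fact that makes the two containments in the inductive step compose correctly; beyond that, the argument is essentially bookkeeping. I would also implicitly restrict $\alpha\in(0,1]$ in Step~\texttt{S3} so that the convex combination indeed stays inside the approximate feasible set.
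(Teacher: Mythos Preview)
Your proposal is correct and follows essentially the same approach as the paper: part~(i) via the upper approximation $\tilde{\mc{C}}(\theta;\bar{\bs{\omega}})\subseteq\mc{C}(\theta)$ (and likewise for each $\tilde{\mc{C}}_i$) obtained by linearizing the concave term, and part~(ii) via induction using $\bs{\omega}^k\in\tilde{\mc{R}}(\theta;\bs{\omega}^k)$, convexity of $\tilde{\mc{R}}(\theta;\bs{\omega}^k)$, and the convex-combination update. Your write-up is in fact more explicit than the paper's (you spell out the gradient inequality for $\tfrac{1}{2}\|\nu\|^2$ and correctly flag the implicit restriction $\alpha\in(0,1]$ needed for the convex combination), but the underlying argument is the same.
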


\begin{proof}
	$\textrm{(i)}$ The upper approximation of the nonlinear constraints, which holds true for all $\bar{\bs{\omega}} \in \mc{R}(\theta)$, implies $\tilde{\mc{C}}(\theta; \bar{\bs{\omega}}) \subseteq \mc{C}(\theta)$ and $\tilde{\mc{C}}_i(\theta_i; \bar{\bs{\omega}}) \subseteq \mc{C}_i(\theta_i)$, $i \in \mc{I}$. Therefore, $\tilde{\Omega}(\theta; \bar{\bs{\omega}}) \subseteq \Omega(\theta)$, and in view of the definitions in \eqref{eq:feasible_set} and \eqref{eq:convex_feasible_set}, inclusion $\textrm{(i)}$ can be deduced.
	
	$\textrm{(ii)}$ First, in view of the approximation of the constraints, note that $\bs{\omega}^k \in \tilde{\mc{R}}(\theta; \bs{\omega}^k)$, for all $k \in \N$, with $\tilde{\mc{R}}(\theta; \bs{\omega}^k)$ convex subset of $\mc{R}(\theta)$. Then, the proof follows by induction by considering that $\bs{\omega}^{k+1}$ is a convex combination of $\hat{\bs{\omega}}^k \in \tilde{\mc{R}}(\theta; \bs{\omega}^k)$ and $\bs{\omega}^k$.
\end{proof}
\smallskip
\begin{lemma}\label{lemma:fundamental_mapping}
	For every $\bar{\bs{\omega}} \in \mc{R}(\theta)$, the vector $(\hat{\bs{\varphi}}(\bar{\bs{\omega}}) - \bar{\bs{\varphi}})$ is a descent direction for $J_0(\bs{\varphi})$ in $R(\theta)$, evaluated at $\bar{\bs{\varphi}}$\blue{, i.e., $(\bar{\bs{\varphi}} - \hat{\bs{\varphi}}(\bar{\bs{\omega}}))^\top \nabla J_0(\bar{\bs{\varphi}}) \geq \sigma \| \bar{\bs{\omega}} - \hat{\bs{\omega}}(\bar{\bs{\omega}}) \|^2 > 0$.}
	\hfill$\square$
%	\begin{enumerate}
%		\item $\hat{\bs{\omega}}(\cdot)$ is Lipschitz continuous on $\mc{R}(\theta)$ with constant $\hat{\kappa} \coloneqq \tilde{\kappa}_0/\sigma$.
		
%		\item For every $\bar{\bs{\omega}} \in \mc{R}(\theta)$, the vector $(\hat{\bs{\omega}}(\bar{\bs{\omega}}) - \bar{\bs{\omega}})$ is a descent direction for $J_0(y_0, \bs{x})$ evaluated at $\col(\bar{\bs{x}}, \bar{y}_0)$.	
		
%		\item The set of fixed points of $\hat{\bs{\omega}}(\cdot)$ coincides with the set of stationary solutions to $R(\theta)$.
%	\end{enumerate}
\end{lemma}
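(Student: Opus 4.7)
The plan is to exploit the optimality of $\hat{\bs{\omega}}(\bar{\bs{\omega}})$ for the strongly convex inner problem $\tilde{R}(\theta;\bar{\bs{\omega}})$ in \eqref{eq:KKT_into_leader_relaxed_compact} and to show that testing the associated variational inequality at the feasible point $\bar{\bs{\omega}}$ directly yields the desired descent inequality.

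First, I would verify that $\bar{\bs{\omega}}\in\tilde{\mc{R}}(\theta;\bar{\bs{\omega}})$. This is essentially the linearization-is-tight-at-the-base-point argument used implicitly in the proof of Lemma~\ref{lemma:iterate_feas}(ii): the upper approximations defining $\tilde{\mc{C}}(\theta;\bar{\bs{\omega}})$ and $\tilde{\mc{C}}_i(\theta_i;\bar{\bs{\omega}})$ are constructed from first-order linearizations of the concave terms $\tfrac{1}{2}(\lambda^\top\lambda+\mu^\top\mu)$ around $\bar{\nu}$, so they hold with equality at $\bar{\bs{\omega}}$; the equality constraints $A_\omega\bs{\omega}-d=0$ are satisfied because $\bar{\bs{\omega}}\in\mc{R}(\theta)$; and $\bar{\bs{\omega}}\in\Omega(\theta)$ by assumption. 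Thus $\bar{\bs{\omega}}$ is admissible in the inner problem.

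Next, by Lemma~\ref{lemma:basic_J_0}(i), $\tilde{J}_0(\cdot\,;\bar{\bs{\omega}})$ is $\sigma$-strongly convex on the convex feasible set $\tilde{\mc{R}}(\theta;\bar{\bs{\omega}})$, so $\hat{\bs{\omega}}(\bar{\bs{\omega}})$ is its unique minimizer and satisfies the first-order optimality condition
\[
(\bs{\omega}-\hat{\bs{\omega}}(\bar{\bs{\omega}}))^{\!\top}\nabla\tilde{J}_0(\hat{\bs{\omega}}(\bar{\bs{\omega}});\bar{\bs{\omega}})\geq 0,\quad\forall \bs{\omega}\in\tilde{\mc{R}}(\theta;\bar{\bs{\omega}}).
\]
Using $\nabla\tilde{J}_0(\bs{\omega};\bar{\bs{\omega}})=c_\omega(\bar{\bs{\omega}})+\sigma(\bs{\omega}-\bar{\bs{\omega}})$ and plugging the admissible choice $\bs{\omega}=\bar{\bs{\omega}}$ into the above VI, I would obtain
\[
(\bar{\bs{\omega}}-\hat{\bs{\omega}}(\bar{\bs{\omega}}))^{\!\top}\bigl[c_\omega(\bar{\bs{\omega}})+\sigma(\hat{\bs{\omega}}(\bar{\bs{\omega}})-\bar{\bs{\omega}})\bigr]\geq 0,
\]
which, after rearrangement, gives $(\bar{\bs{\omega}}-\hat{\bs{\omega}}(\bar{\bs{\omega}}))^{\!\top}c_\omega(\bar{\bs{\omega}})\geq\sigma\|\bar{\bs{\omega}}-\hat{\bs{\omega}}(\bar{\bs{\omega}})\|^2$.

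Finally, I would invoke the block structure of $c_\omega(\bar{\bs{\omega}})=\col(\nabla J_0(\bar{\bs{\varphi}}),0)$, where the zero block corresponds to the multiplier components $\{\nu_i\}_{i\in\mc{I}}$ and $\nu$ of $\bs{\omega}$. Hence the left-hand side collapses to $(\bar{\bs{\varphi}}-\hat{\bs{\varphi}}(\bar{\bs{\omega}}))^{\!\top}\nabla J_0(\bar{\bs{\varphi}})$, yielding the claimed bound, with strict positivity whenever $\bar{\bs{\omega}}\neq\hat{\bs{\omega}}(\bar{\bs{\omega}})$ (i.e., whenever $\bar{\bs{\omega}}$ is not a fixed point of the inner solution map). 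I don't foresee any serious obstacle: the only subtlety is the correct identification of the zero block in $c_\omega$ with the non-$\bs{\varphi}$ coordinates of $\bs{\omega}$, and the verification that the feasibility of $\bar{\bs{\omega}}$ in the convexified constraint set $\tilde{\mc{R}}(\theta;\bar{\bs{\omega}})$ is preserved by the upper-approximation construction.
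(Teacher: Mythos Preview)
Your proposal is correct and follows essentially the same route as the paper: both test the first-order optimality condition of $\tilde{R}(\theta;\bar{\bs{\omega}})$ at the feasible point $\bar{\bs{\omega}}$ and use the block structure $c_\omega(\bar{\bs{\omega}})=\col(\nabla J_0(\bar{\bs{\varphi}}),0)$ to collapse the left-hand side. The only cosmetic difference is that you expand $\nabla\tilde{J}_0(\bs{\omega};\bar{\bs{\omega}})=c_\omega(\bar{\bs{\omega}})+\sigma(\bs{\omega}-\bar{\bs{\omega}})$ and rearrange directly, whereas the paper adds and subtracts $(\bar{\bs{\omega}}-\hat{\bs{\omega}}(\bar{\bs{\omega}}))^\top\nabla\tilde{J}_0(\bar{\bs{\omega}};\bar{\bs{\omega}})$ and then invokes the $\sigma$-strong convexity of Lemma~\ref{lemma:basic_J_0}(i) to bound the resulting monotonicity term---the two manipulations are algebraically identical.
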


\begin{proof}
	Given any $\bar{\bs{\omega}} \in \mc{R}(\theta)$, by definition, $\hat{\bs{\omega}}(\bar{\bs{\omega}})$ satisfies the minimum principle for \eqref{eq:KKT_into_leader_relaxed_compact}, i.e., $(\bs{\zeta} - \hat{\bs{\omega}}(\bar{\bs{\omega}}))^\top \nabla \tilde{J}_0(\hat{\bs{\omega}}(\bar{\bs{\omega}}); \bar{\bs{\omega}}) \geq 0$ for all $\bs{\zeta} \in \tilde{\mc{R}}(\theta; \bar{\bs{\omega}})$. From Lemma~\ref{lemma:iterate_feas}(ii), we choose $\bs{\zeta} = \bar{\bs{\omega}}$, and by adding and subtracting the term $(\bar{\bs{\omega}} - \hat{\bs{\omega}}(\bar{\bs{\omega}}))^\top \nabla \tilde{J}_0(\bar{\bs{\omega}}; \bar{\bs{\omega}})$, we obtain
	\begin{align*}
	(\bar{\bs{\omega}} - \hat{\bs{\omega}}(\bar{\bs{\omega}}))^\top &\nabla \tilde{J}_0(\bar{\bs{\omega}}; \bar{\bs{\omega}}) \geq \\ 
	&(\bar{\bs{\omega}} - \hat{\bs{\omega}}(\bar{\bs{\omega}}))^\top (\nabla \tilde{J}_0(\bar{\bs{\omega}}; \bar{\bs{\omega}}) -  \nabla \tilde{J}_0(\hat{\bs{\omega}}(\bar{\bs{\omega}}); \bar{\bs{\omega}}))
	\end{align*}
	By directly replacing $\nabla \tilde{J}_0(\bar{\bs{\omega}}; \bar{\bs{\omega}})$ with $c_\omega(\bar{\bs{\omega}}) = \col(\nabla J_0(\bar{\bs{\varphi}}), 0)$, the term on the left-hand side is equal to $(\bar{\bs{\varphi}} - \hat{\bs{\varphi}}(\bar{\bs{\omega}}))^\top \nabla J_0(\bar{\bs{\varphi}})$, while the one on the right-hand side, in view of Lemma~\ref{lemma:basic_J_0}(i), is bounded from below by $\sigma \| \bar{\bs{\omega}} - \hat{\bs{\omega}}(\bar{\bs{\omega}}) \|^2$, leading to
	$$
	(\bar{\bs{\varphi}} - \hat{\bs{\varphi}}(\bar{\bs{\omega}}))^\top \nabla J_0(\bar{\bs{\varphi}}) \geq \sigma \| \bar{\bs{\omega}} - \hat{\bs{\omega}}(\bar{\bs{\omega}}) \|^2.
	$$
\end{proof}
\smallskip

Before establishing the convergence to an $\ell$-SE for the sequence generated by Algorithm~\ref{alg:two_layers}, we recall a key result provided in \cite{scutari2017parallel}.
\smallskip
\begin{lemma}(\hspace{-.01cm}\cite[Th.~14]{scutari2017parallel})\label{lemma:scutari}
	Let $(\bs{\omega}^k)_{k \in \N}$ be the sequence generated by Algorithm~\ref{alg:two_layers} and assume that $\textrm{lim}_{k \to \infty} \, \| \hat{\bs{\omega}}(\bs{\omega}^k) - \bs{\omega}^k \| = 0$. Then, every limit point of $(\bs{\omega}^k)_{k \in \N}$ generated by Algorithm~\ref{alg:two_layers} is a stationary solution to $R(\theta)$.
	\hfill$\square$
\end{lemma}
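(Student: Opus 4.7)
Since Lemma~\ref{lemma:scutari} is explicitly taken from \cite[Th.~14]{scutari2017parallel}, the cleanest plan is to verify that the surrogate problem $\tilde{R}(\theta;\cdot)$ and the mapping $\hat{\bs{\omega}}(\cdot)$ satisfy the structural assumptions required there, and then invoke the cited theorem directly. The ingredients the theorem asks for are already in our hands: strong convexity and Lipschitz continuity of the surrogate gradient (Lemma~\ref{lemma:basic_J_0}), upper approximation of the nonconvex sets and iterate feasibility (Lemma~\ref{lemma:iterate_feas}), the descent property (Lemma~\ref{lemma:fundamental_mapping}), and consistency of the surrogate gradient at the linearization point, i.e.\ $\nabla_{\bs{\omega}}\tilde{J}_0(\bar{\bs{\omega}};\bar{\bs{\omega}}) = c_\omega(\bar{\bs{\omega}}) = \nabla_{\bs{\omega}} J_0(\bar{\bs{\omega}})$, which holds because $J_0$ is independent of the multiplier blocks. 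For a self-contained argument in our notation, I would proceed as follows.

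First, I would take an arbitrary limit point $\bs{\omega}^\infty$ of $(\bs{\omega}^k)_{k\in\N}$ along a subsequence $\bs{\omega}^{k_j}\to\bs{\omega}^\infty$. Lemma~\ref{lemma:iterate_feas}(ii) places each $\bs{\omega}^{k_j}$ in the closed set $\mc{R}(\theta)$, so $\bs{\omega}^\infty\in\mc{R}(\theta)$, and the standing hypothesis $\|\hat{\bs{\omega}}^k-\bs{\omega}^k\|\to 0$ also gives $\hat{\bs{\omega}}^{k_j}\to\bs{\omega}^\infty$. Then I would write the minimum principle for the unique minimizer $\hat{\bs{\omega}}^{k_j}$ of $\tilde{R}(\theta;\bs{\omega}^{k_j})$, namely $(\bs{\zeta}-\hat{\bs{\omega}}^{k_j})^\top \nabla\tilde{J}_0(\hat{\bs{\omega}}^{k_j};\bs{\omega}^{k_j})\geq 0$ for every $\bs{\zeta}\in\tilde{\mc{R}}(\theta;\bs{\omega}^{k_j})$, and pass to the limit along $k_j$. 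On the gradient side, $\nabla\tilde{J}_0(\bs{\omega};\bar{\bs{\omega}}) = c_\omega(\bar{\bs{\omega}}) + \sigma(\bs{\omega}-\bar{\bs{\omega}})$; thanks to Standing Assumption~\ref{ass:continuity} and the vanishing of the proximal term, this converges to $c_\omega(\bs{\omega}^\infty)=\nabla_{\bs{\omega}} J_0(\bs{\omega}^\infty)$. Combined with an approximating sequence $\bs{\zeta}^{k_j}\in\tilde{\mc{R}}(\theta;\bs{\omega}^{k_j})$ with $\bs{\zeta}^{k_j}\to\bs{\zeta}$ for a generic $\bs{\zeta}\in\mc{R}(\theta)$, this yields $(\bs{\zeta}-\bs{\omega}^\infty)^\top \nabla_{\bs{\omega}} J_0(\bs{\omega}^\infty)\geq 0$ for every such $\bs{\zeta}$ in a neighborhood of $\bs{\omega}^\infty$, which is exactly the first-order stationarity of $R(\theta)$ at $\bs{\omega}^\infty$.

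The main obstacle will be precisely the lifting step, i.e.\ the construction of the approximating sequence $\bs{\zeta}^{k_j}$, because the surrogate feasible set varies with the linearization point through the concave upper-approximations $\tilde{\mc{C}}(\theta;\cdot)$ and $\tilde{\mc{C}}_i(\theta_i;\cdot)$ of the relaxed complementarity constraints. I would address this by exploiting Standing Assumption~\ref{standing:existence_of_point} together with Lemma~\ref{lemma:scholtes}, which guarantee LICQ for $R(\theta)$ in an open neighborhood of $\bs{\omega}^\infty$; this regularity supplies the Robinson-type constraint qualification needed to perturb any test point $\bs{\zeta}\in\mc{R}(\theta)$ into $\tilde{\mc{R}}(\theta;\bs{\omega}^{k_j})$, i.e., the inner semicontinuity of the set-valued mapping $\bar{\bs{\omega}}\mapsto\tilde{\mc{R}}(\theta;\bar{\bs{\omega}})$ at $\bs{\omega}^\infty$, thereby closing the argument.
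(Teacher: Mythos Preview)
The paper provides no proof of this lemma; it is stated as a direct citation of \cite[Th.~14]{scutari2017parallel} with no accompanying argument. Your plan—verify that $\tilde R(\theta;\cdot)$ and $\hat{\bs{\omega}}(\cdot)$ meet the structural hypotheses of the \gls{SCA} framework and then invoke the cited theorem—is therefore exactly the paper's (implicit) approach, only spelled out.

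Your additional self-contained sketch is mostly sound, but the lifting step is not quite what you claim. Inner semicontinuity of $\bar{\bs{\omega}}\mapsto\tilde{\mc{R}}(\theta;\bar{\bs{\omega}})$ at $\bs{\omega}^\infty$ only lets you approximate test points $\bs{\zeta}\in\tilde{\mc{R}}(\theta;\bs{\omega}^\infty)$, not arbitrary $\bs{\zeta}\in\mc{R}(\theta)$: since the convex upper approximation of $\lambda^\top\mu$ is strict away from the linearization point, a $\bs{\zeta}\in\mc{R}(\theta)$ with the relaxed complementarity active need not lie in (or be approximable from) any nearby surrogate set. The limiting variational inequality you obtain is thus only over $\tilde{\mc{R}}(\theta;\bs{\omega}^\infty)$. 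What actually closes the argument in \cite{scutari2017parallel} is that the surrogate constraints coincide with the originals in both value and gradient at the linearization point; under the \gls{LICQ} supplied by Lemma~\ref{lemma:scholtes}, the linearized (hence tangent) cones of $\mc{R}(\theta)$ and $\tilde{\mc{R}}(\theta;\bs{\omega}^\infty)$ at $\bs{\omega}^\infty$ therefore agree, and stationarity for the surrogate transfers to stationarity for $R(\theta)$.
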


\smallskip
\begin{theorem}\label{th:convergence}
	Let $\alpha$ in Algorithm~\ref{alg:two_layers} be chosen so that $\alpha \in (0,2 \sigma/\kappa_0)$. Then, the sequence $(\bs{\omega}^k)_{k \in \N}$ generated by Algorithm~\ref{alg:two_layers} converges to an optimal solution $\bs{\omega}^\ast$ to $R(\theta)$ in \eqref{eq:KKT_into_leader_relaxed}, which subvector $(y_0^\ast, \bs{x}^\ast)$ is an \gls{l-SE} of the hierarchical game in \eqref{eq:single_prob}--\eqref{eq:single_prob_leader}.
	\hfill$\square$
\end{theorem}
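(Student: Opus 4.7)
The plan is to establish a sufficient decrease inequality for $J_0$ along the outer iterates, use coerciveness to deduce that the residual $\|\hat{\bs{\omega}}^k - \bs{\omega}^k\|$ vanishes, then invoke Lemma~\ref{lemma:scutari} to identify limit points as stationary solutions of $R(\theta)$, and finally promote these limit points to \glspl{l-SE} via Standing Assumption~\ref{standing:existence_of_point} and Lemma~\ref{lemma:scholtes}.

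\textbf{Step 1 (sufficient decrease).} Denote $\bs{\varphi}^k \coloneqq (y_0^k, \bs{x}^k)$ and observe that the update in (\texttt{S3}) gives $\bs{\varphi}^{k+1} - \bs{\varphi}^k = \alpha(\hat{\bs{\varphi}}^k - \bs{\varphi}^k)$ with $\bs{\varphi}^k, \bs{\varphi}^{k+1} \in \Phi$ by Lemma~\ref{lemma:iterate_feas}(ii). Since $\nabla J_0$ is $\kappa_0$-Lipschitz on $\Phi$ by Standing Assumption~\ref{ass:continuity}, the standard descent lemma yields
$$J_0(\bs{\varphi}^{k+1}) \leq J_0(\bs{\varphi}^{k}) + \alpha \nabla J_0(\bs{\varphi}^k)^\top (\hat{\bs{\varphi}}^k - \bs{\varphi}^k) + \tfrac{\alpha^2 \kappa_0}{2}\|\hat{\bs{\varphi}}^k - \bs{\varphi}^k\|^2.$$
By Lemma~\ref{lemma:fundamental_mapping}, the inner product is bounded from above by $-\sigma\|\hat{\bs{\omega}}^k - \bs{\omega}^k\|^2$, and since $\bs{\varphi}$ is a subvector of $\bs{\omega}$, we have $\|\hat{\bs{\varphi}}^k - \bs{\varphi}^k\| \leq \|\hat{\bs{\omega}}^k - \bs{\omega}^k\|$. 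Combining,
$$J_0(\bs{\varphi}^{k+1}) \leq J_0(\bs{\varphi}^{k}) - \alpha \left(\sigma - \tfrac{\alpha \kappa_0}{2}\right) \|\hat{\bs{\omega}}^k - \bs{\omega}^k\|^2,$$
and the choice $\alpha \in (0, 2\sigma/\kappa_0)$ makes the coefficient strictly positive.

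\textbf{Step 2 (vanishing residual and boundedness).} The monotone nonincrease of $J_0(\bs{\varphi}^k)$ combined with coerciveness of $J_0$ (Standing Assumption~\ref{ass:continuity}) confines $(\bs{\varphi}^k)$ to a compact sublevel set on which $J_0$ is bounded below. The equality $A_\omega \bs{\omega}^k = d$ expresses the dual components of $\bs{\omega}^k$ as affine functions of $(y_0^k, \bs{x}^k)$ in combination with the regularization $\lambda^\top \mu \leq \theta$, $\lambda_i^\top \mu_i \leq \theta_i$, which bounds the multiplier components; hence $(\bs{\omega}^k)$ itself is bounded. Telescoping the inequality from Step~1 gives $\sum_{k \geq 0} \|\hat{\bs{\omega}}^k - \bs{\omega}^k\|^2 < \infty$, so $\|\hat{\bs{\omega}}^k - \bs{\omega}^k\| \to 0$.

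\textbf{Step 3 (stationarity $\Rightarrow$ \gls{l-SE}).} By Lemma~\ref{lemma:scutari}, every limit point $\bs{\omega}^\ast$ of the bounded sequence $(\bs{\omega}^k)$ is a stationary solution to $R(\theta)$. By Standing Assumption~\ref{standing:existence_of_point} and Lemma~\ref{lemma:scholtes}, the \gls{LICQ} holds on a neighborhood of $\tilde{\bs{\omega}}$, so $\bs{\omega}^\ast$ lying in this neighborhood is a KKT point, hence a local minimizer of $R(\theta)$ by the strong convexity of the surrogate and the closing remark in \S II.C. Unpacking $\bs{\omega}^\ast = (y_0^\ast, \bs{x}^\ast, \nu^\ast)$, the embedded KKT conditions \eqref{eq:KKT_followers} certify that $\bs{x}^\ast \in \mc{S}(y_0^\ast)$, and the local optimality of $J_0$ at $(y_0^\ast, \bs{x}^\ast)$ precisely matches Definition~\ref{def:S-EQ}, establishing the \gls{l-SE} property.

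\textbf{Main obstacle.} The delicate step is Step~3: one must verify that at least one limit point falls inside the neighborhood of $\tilde{\bs{\omega}}$ where the \gls{MPCC}-\gls{LICQ} yields actual \gls{LICQ} for $R(\theta)$, and then pass from a stationary solution of the regularized \gls{MPCC} to a genuine local minimum that induces an $\ell$-SE. A natural sufficient condition is to initialize $\bs{\omega}^0$ close enough to $\tilde{\bs{\omega}}$ so that the monotone descent keeps $(\bs{\omega}^k)$ within this neighborhood; the convergence of the \emph{whole} sequence (rather than just limit points) would then follow from the isolation of such KKT points under \gls{LICQ} and the uniform strong convexity in Lemma~\ref{lemma:basic_J_0}(i).
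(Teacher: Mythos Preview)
Your Steps~1 and~2 match the paper's argument almost line by line: the descent lemma combined with Lemma~\ref{lemma:fundamental_mapping} yields the same sufficient-decrease inequality, and coerciveness of $J_0$ together with $\|\hat{\bs{\varphi}}^k-\bs{\varphi}^k\|\le\|\hat{\bs{\omega}}^k-\bs{\omega}^k\|$ gives convergence of $J_0(\bs{\varphi}^k)$ and hence $\|\hat{\bs{\omega}}^k-\bs{\omega}^k\|\to 0$. Your extra care in justifying boundedness of the dual/slack components is welcome; the paper simply asserts that $(\bs{\omega}^k)$ is bounded.

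Step~3 is where you and the paper diverge. The paper does \emph{not} invoke the \gls{LICQ} neighborhood or Standing Assumption~\ref{standing:existence_of_point} at this stage. Instead, after Lemma~\ref{lemma:scutari} delivers stationarity of any limit point, the paper observes that $(J_0(\bs{\varphi}^k))_{k\in\N}$ is strictly decreasing, so no limit point can be a local maximum of $J_0$; it then concludes that the limit point is an optimal (i.e., locally minimal) solution of $R(\theta)$ and appeals to the closing remark of \S II.C to obtain the \gls{l-SE}. Your route has two soft spots: (i) the strong convexity in Lemma~\ref{lemma:basic_J_0}(i) pertains to the \emph{surrogate} $\tilde{J}_0(\cdot;\bar{\bs{\omega}})$, not to $R(\theta)$, so it does not by itself certify that a KKT point of $R(\theta)$ is a local minimizer; (ii) for $\theta>0$ the relaxed constraint $\lambda^\top\mu\le\theta$ does not literally yield $\bs{x}^\ast\in\mc{S}(y_0^\ast)$ via \eqref{eq:KKT_followers}, so the \gls{l-SE} claim ultimately rests on the same \S II.C remark you cite rather than on unpacking the KKT system. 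Your ``Main obstacle'' paragraph is an honest diagnosis, but note that the paper neither assumes initialization near $\tilde{\bs{\omega}}$ nor argues convergence of the whole sequence from isolated KKT points; it is content with the strict-descent argument above.
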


\begin{proof}
	By combining the descent lemma \cite[Prop.~A.24]{bertsekas1997nonlinear} and Lemma~\ref{lemma:fundamental_mapping}, the step (\texttt{S3}) in Algorithm~\ref{alg:two_layers} leads to:
	\begin{align*}
	J_0(\bs{\varphi}^{k+1}) &\leq J_0(\bs{\varphi}^{k}) + \alpha \nabla^\top J_0(\bs{\varphi}^{k}) (\hat{\bs{\varphi}}(\bs{\omega}^k) - \bs{\varphi}^k)\\ 
	&\hspace{4.2cm}+ \alpha^2\tfrac{\kappa_0}{2} \| \hat{\bs{\varphi}}(\bs{\omega}^k) - \bs{\varphi}^k \|^2\\
	\\[-2.5ex]
%	&\leq J_0(\bs{\varphi}^{k}) - \alpha \sigma \| \hat{\bs{\varphi}}(\bs{\omega}^k) - \bs{\varphi}^k \|^2 + \alpha^2\tfrac{\kappa_0}{2} \| \hat{\bs{\varphi}}(\bs{\omega}^k) - \bs{\varphi}^k \|^2\\
%	\\[-3ex]
	&\leq J_0(\bs{\varphi}^{k}) - \alpha \left(\sigma - \alpha\tfrac{\kappa_0}{2} \right) \| \hat{\bs{\omega}}(\bs{\omega}^k) - \bs{\omega}^k \|^2,
	\end{align*}
	where the second inequality follows from $\| \hat{\bs{\omega}}(\bs{\omega}^k) - \bs{\omega}^k \| \ge \| \hat{\bs{\varphi}}(\bs{\omega}^k) - \bs{\varphi}^k \|$.
	If $\alpha < 2 \sigma/\kappa_0$, then $(J_0(\bs{\varphi}^k))_{k \in \N}$ shall converge to a finite value, since $J_0(\bs{\varphi}^k) \to -\infty$ can not happen in view of Standing Assumption~\ref{ass:continuity}. Thus, the convergence of $(J_0(\bs{\varphi}^k))_{k \in \N}$ implies $\textrm{lim}_{k \to \infty} \, \| \hat{\bs{\omega}}(\bs{\omega}^k) - \bs{\omega}^k \| = 0$, and therefore the bounded sequence $(\bs{\omega}^k)_{k \in \N} \in \mc{R}(\theta)$ in view of Lemma~\ref{lemma:iterate_feas}, and  has a limit point in $\mc{R}(\theta)$. From Lemma~\ref{lemma:scutari}, such a limit point is a stationary solution to $R(\theta)$, and since $(J_0(\bs{\varphi}^k))_{k \in \N}$ is a strictly decreasing sequence, no limit point can be a local maximum of $J_0$. Thus, $(\bs{\omega}^k)_{k \in \N}$ converges to an optimal solution $\bs{\omega}^\ast$ to \eqref{eq:KKT_into_leader_relaxed}, which subvector $(y_0^\ast, \bs{x}^\ast)$ is an \gls{l-SE} of the original hierarchical game in \eqref{eq:single_prob}--\eqref{eq:single_prob_leader}.
\end{proof}
\smallskip

\begin{remark}
	If the parameters $\sigma$ and $\kappa_0$ are not globally known, Theorem~\ref{th:convergence} can be equivalently restated  according to a vanishing step-size rule, i.e., $\alpha = \alpha^k$ that shall be chosen so that $\alpha^k \in (0,1]$, for all $k \in \N$, $\alpha^k \to 0$ and $\sum_{k \in \N} \alpha^k = + \infty$.
	\hfill$\square$
\end{remark}
%\smallskip

\subsection{An augmented Lagrangian approach to solve the inner loop}

%Next, we introduce an augmented Lagrangian-based algorithm to compute the solution to $\tilde{R}(\theta; \bs{\omega}^k)$ in a semi-decentralized fashion by exploiting the structure of the game.
A scalable and privacy-preserving algorithm, suitable to solve (\texttt{S2}) in Algorithm~\ref{alg:two_layers} by exploiting the hierarchical structure of the original game, is the \gls{ADAL} method proposed in \cite{chatzipanagiotis2017convergence}. Since we are interested in finding the optimal solution to $\tilde{R}(\theta; \bs{\omega}^k)$, from now on we omit the dependence on $\bs{\omega}^k$ (unless differently specified) to alleviate the notation.

\begin{algorithm}[!t]
	\caption{\gls{ADAL} for (\texttt{S2}) of Algorithm~\ref{alg:two_layers}}\label{alg:ADAL}
	\DontPrintSemicolon
	\SetArgSty{}
	\SetKwFor{ForAll}{for all}{do}{end forall}
	%	\textbf{Set:} .\\
	%	\smallskip
	\textbf{Initialization:} $\eta(0) \in \R^s$, $\tau,\, \rho > 0$\\
%	\begin{itemize}
%		\item[] 
%	\end{itemize}
	\smallskip
	\textbf{Iteration $(t \in \N)$:} \\
	\begin{itemize}
		\item Leader:\\
		\medskip
		$
		\left\{
		\begin{aligned}
		y^\star_0(t) &= \underset{y_0 \in \mc{Y}_0}{\textrm{argmin}} \; \hat{\mc{L}}^k_{\ell}(y_0, \eta(t), z_{\textrm{f}}(t), z_{\textrm{c}}(t))\\
		z_\ell(t+1) &= z_\ell(t) + \tau ( A_\ell \, y^\star_0(t) - z_\ell(t) )
		\end{aligned}
		\right.
		$
		
		\medskip
		
		\item Followers:\\
		\medskip
		$
		\left\{
		\begin{aligned}
		\bs{y}^\star(t) &= \underset{\bs{y} \in \tilde{\mc{Y}}^k}{\textrm{argmin}} \; \hat{\mc{L}}^k_{\textrm{f}}(\bs{y}, \eta(t), z_\ell(t), z_{\textrm{c}}(t))\\
		z_{\textrm{f}}(t+1) &= z_{\textrm{f}}(t) + \tau ( A_{\textrm{f}} \, \bs{y}^\star(t) - z_{\textrm{f}}(t) )
		\end{aligned}
		\right.
		$
		
		\medskip
		
		\item Coordinator:\\
		\medskip
		$
		\left\{
		\begin{aligned}
		\nu^\star(t) &= \underset{\nu \in \tilde{\mc{C}}^k(\theta)}{\textrm{argmin}} \;  \hat{\mc{L}}^k_{\textrm{c}}(\nu, \eta(t), z_{\textrm{f}}(t), z_{\ell}(t))\\
		z_{\textrm{c}}(t+1) &= z_{\textrm{c}}(t) + \tau \left( A_{\textrm{c}} \, \nu^\star(t) - z_{\textrm{c}}(t) \right)
		\end{aligned}
		\right.
		$
	\end{itemize}
	
	\vspace{-.2cm}
	$$
	\eta(t+1) = \eta(t) + \rho \tau \left( z_{\textrm{f}}(t+1) + z_\ell(t+1) + z_{\textrm{c}}(t+1) - d  \right)
	$$
\end{algorithm}

Thus, at every iteration $k \in \N$ of the outer loop, the Lagrangian function associated to \eqref{eq:KKT_into_leader_relaxed_compact} is defined as
\begin{equation}\label{eq:Lagrangian}
\mc{L}^k(\bs{\omega},\nu) = (c_{\omega}^k)^\top \bs{\omega} + \frac{\sigma}{2} \|\bs{\omega} - \bs{\omega}^k \|^2 + \eta^\top (A_\omega \, \bs{\omega} - d),
\end{equation}
where $c_{\omega}^k \coloneqq c_{\omega}(\bs{\omega}^k)$, and $\nu \in \R^{l}$, $l \coloneqq n + m + p$, is the dual variable associated with the linear equality constraints. \blue{Note that} the Lagrangian in \eqref{eq:Lagrangian} can be rewritten as the sum of terms associated to different entities, which happens to correspond to leader, the set of followers, and a central coordinator, respectively. In details, we define $\mc{L}^k_\ell \coloneqq (c^k_\ell)^\top y_0 + \tfrac{\sigma}{2} \| y_0 - y^k_0 \|^2 + \eta^\top A_\ell \, y_0$, $\mc{L}^k_{\textrm{f}} \coloneqq (c^k_{\textrm{f}})^\top \bs{y} + \tfrac{\sigma}{2} \| \bs{y} - \bs{y}^k \|^2 + \eta^\top A_{\textrm{f}} \, \bs{y}$, and $\mc{L}^k_{\textrm{c}} \coloneqq \tfrac{\sigma}{2} \| \nu - \nu^k \|^2 + \eta^\top A_{\textrm{c}} \, \nu$.
In light of \cite{chatzipanagiotis2017convergence}, we augment each one of these terms as, e.g., $ \hat{\mc{L}}^k_{\textrm{f}} \coloneqq \mc{L}^k_{\textrm{f}} + \frac{\rho}{2} \| A_{\textrm{f}} \, \bs{y} + A_\ell \, y_0 + A_{\textrm{c}} \, \nu  - d \|^2$ ($\hat{\mc{L}}^k_{\ell}$  and $\hat{\mc{L}}^k_{\textrm{c}}$ are identical), where $\rho >0$ is a penalty term to be designed freely.

The main steps of the proposed semi-decentralized procedure are summarized in Algorithm~\ref{alg:ADAL}, where we emphasize that each augmented Lagrangian term depends on the linearization at the current outer iteration $k \in \N$. Specifically, at every iteration $t \in \N$ of the inner loop, the ADAL requires that the followers, the leader and the central coordinator compute in parallel a minimization step of the local augmented Lagrangian.
% with respect to their decision variables: 
%$$
%\begin{aligned}
%\bs{x}^\star_{\textrm{f}}(t) &= \underset{\bs{y} \in \tilde{\mc{X}}^k_{\textrm{f}}}{\textrm{argmin}} \; \mc{L}^k_{\textrm{f}}(\bs{y}, \nu(t)) + \frac{\rho}{2} \| A_{\textrm{f}} \, \bs{y} + z_{\ell}(t) + z_{\textrm{c}}(t)  - d \|^2,\\
%y^\star_0(t) &= \underset{y_0 \in \mc{Y}_0}{\textrm{argmin}} \; \mc{L}^k_\ell(y_0, \nu(t)) + \frac{\rho}{2} \| A_{y_0} \, y_0 + z_{\textrm{f}}(t) + z_{\textrm{c}}(t)  - d \|^2,\\
%\nu^\star(t) &= \underset{\nu \in \tilde{\mc{C}}^k(\theta)}{\textrm{argmin}} \;   \mc{L}^k_{\textrm{c}}(\nu, \nu(t)) + \frac{\rho}{2} \| A_{\textrm{c}} \, \nu + z_{\textrm{f}}(t) + z_\ell(t)  - d \|^2,
%\end{aligned}
%$$
Here, $z_\ell \coloneqq A_\ell \, y_0$, $z_{\textrm{f}} \coloneqq A_{\textrm{f}} \, \bs{y}$ and $z_{\textrm{c}} \coloneqq A_{\textrm{c}} \, \nu$ are auxiliary variables introduced for privacy purposes and, given some $\tau > 0$, are locally updated.
% as follows:
%\begin{equation}\label{eq:aux_var}
%	\begin{aligned}
%		z_{\textrm{f}}(t+1) &= A_{\textrm{f}} \, \bs{y}(t+1) = z_{\textrm{f}}(t) + \tau ( A_{\textrm{f}} \, \bs{y}^\star(t) - z_{\textrm{f}}(t) ),\\
%		z_\ell(t+1) &= A_\ell \, y_0(t+1) = z_\ell(t) + \tau ( A_\ell \, y^\star_0(t) - z_\ell(t) ),\\
%		z_{\textrm{c}}(t+1) &= A_{\textrm{c}} \, \nu(t+1) = z_{\textrm{c}}(t) + \tau \left( A_{\textrm{c}} \, \nu^\star(t) - z_{\textrm{c}}(t) \right).
%	\end{aligned}
%\end{equation}
Finally, the central coordinator, which in some practical applications may eventually coincide with the leader, gathers $z_\ell(t+1)$ and $z_{\textrm{f}}(t+1)$ from the leader and followers, and updates the dual variable.
%$$
%\nu(t+1)= \nu(t) + \rho \tau ( z_{\textrm{f}}(t+1) + z_\ell(t+1) + z_{\textrm{c}}(t+1) - d  ).
%$$

\smallskip
\begin{proposition}
	Let $\rho > 0$ be sufficiently large and $\tau \in (0, r^{-1}_{\textrm{max}})$, where $r_{\textrm{max}}$ corresponds to the maximum degree among the constraints in \eqref{eq:convex_feasible_set}. Then, the sequence $(\bs{\omega}(t))_{t \in \N}$ generated by Algorithm~\ref{alg:ADAL} converges to the minimizer of $\tilde{R}(\theta; \bs{\omega}^k)$, for all $k \in \N$.
	\hfill$\square$
\end{proposition}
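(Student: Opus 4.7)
The plan is to reduce the statement to a direct application of the ADAL convergence theorem of \cite{chatzipanagiotis2017convergence}, so the work amounts to verifying that the subproblem $\tilde{R}(\theta;\bs{\omega}^k)$ and the three-block decomposition underlying Algorithm~\ref{alg:ADAL} fit the hypotheses of that reference.

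First I would check that $\tilde{R}(\theta;\bs{\omega}^k)$ is a convex program with a unique minimizer. By Lemma~\ref{lemma:basic_J_0}(i), $\tilde{J}_0(\cdot;\bs{\omega}^k)$ is strongly convex with modulus $\sigma$; the sets $\mc{Y}_0$ (Standing Assumption~\ref{ass:Y_0}) and $\mc{X}$ are closed and convex; and each surrogate set $\tilde{\mc{C}}(\theta;\bs{\omega}^k)$ and $\tilde{\mc{C}}_i(\theta_i;\bs{\omega}^k)$ is closed and convex, since its defining inequality is the sum of a convex quadratic in $\bsone^\top \nu$ and an affine term obtained by linearising the concave part of $\lambda^\top\mu$ around $\bar\nu$. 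Consequently $\tilde{\mc{R}}(\theta;\bs{\omega}^k)$ is closed and convex, and the minimiser exists and is unique.

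Second I would spell out the correspondence between Algorithm~\ref{alg:ADAL} and the generic ADAL template. The primal vector $\bs{\omega}$ is partitioned into the three blocks $(y_0,\bs{y},\nu)$ with private convex feasible sets $\mc{Y}_0$, $\tilde{\mc{Y}}^k$, $\tilde{\mc{C}}^k(\theta)$, linked only through the linear coupling $A_\ell\,y_0+A_{\textrm{f}}\,\bs{y}+A_{\textrm{c}}\,\nu=d$. The augmented Lagrangian splits additively into $\hat{\mc{L}}^k_\ell+\hat{\mc{L}}^k_{\textrm{f}}+\hat{\mc{L}}^k_{\textrm{c}}$, the block minimisations are performed in parallel, and the auxiliary variables $(z_\ell,z_{\textrm{f}},z_{\textrm{c}})$ and the dual update in Algorithm~\ref{alg:ADAL} follow verbatim the Jacobi-type scheme of \cite{chatzipanagiotis2017convergence}. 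The scalar $r_{\textrm{max}}$ plays the role of the maximum hyper-edge degree of the constraint incidence in that reference, which quantifies the amount of coupling tolerated by the step-size rule.

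Once these identifications are in place, the convergence theorem of \cite{chatzipanagiotis2017convergence} applies directly: for every $\rho$ exceeding the threshold dictated there by the block-Lipschitz moduli of $\nabla \hat{\mc{L}}^k_\ell$, $\nabla \hat{\mc{L}}^k_{\textrm{f}}$, $\nabla \hat{\mc{L}}^k_{\textrm{c}}$ (finite, since each local cost is quadratic-plus-linear with constants determined by $\sigma$ and the fixed $\bs{\omega}^k$), and for any $\tau\in(0,r_{\textrm{max}}^{-1})$, the primal sequence $(\bs{\omega}(t))_{t\in\N}$ converges to the unique minimiser of $\tilde{R}(\theta;\bs{\omega}^k)$ and the auxiliary sequence asymptotically recovers primal feasibility. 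The main obstacle is purely bookkeeping: one has to match the three-block structure and the maximum-degree constant with the more abstract objects of \cite{chatzipanagiotis2017convergence}; since the coupling constraint is a single linear equality and the block sets are convex with quadratic surrogates, no new analytical ingredient is required beyond what is already established in Lemma~\ref{lemma:basic_J_0} and the convexification carried out in Section~III.
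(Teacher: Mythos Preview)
Your proposal is essentially correct and follows the same route as the paper: verify that $\tilde{R}(\theta;\bs{\omega}^k)$ meets the hypotheses of \cite[Th.~2]{chatzipanagiotis2017convergence} and invoke that result directly. The one ingredient you do not mention, and which the paper explicitly checks, is the constraint qualification: the cited theorem requires \gls{LICQ} for the subproblem, and the paper obtains it by noting that Lemma~\ref{lemma:scholtes} (under Standing Assumption~\ref{standing:existence_of_point}) yields local \gls{LICQ} for $R(\theta)$, which is then inherited by the convex inner approximation $\tilde{R}(\theta;\bs{\omega}^k)$. Your remaining checks---closedness and convexity of the block feasible sets, strong convexity (hence inf-compactness) of $\tilde{J}_0$, and smoothness of the quadratic pieces---cover the other hypotheses the paper lists.
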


\begin{proof}
	The proof follows by noticing that $\tilde{R}(\theta; \bs{\omega}^k)$ satisfies the assumptions in \cite[Th.~2]{chatzipanagiotis2017convergence}, for all $k \in \N$. Specifically, $\tilde{\mc{R}}(\theta; \bs{\omega}^k)$ is a closed and convex set, $\tilde{J}_0(\bs{\omega}; \bs{\omega}^k)$ is inf-compact and each one of its terms is twice continuously differentiable. Finally, Lemma~\ref{lemma:scholtes} provides the local LICQ for $R(\theta)$, directly inherited by $\tilde{R}(\theta; \bs{\omega}^k)$.
\end{proof}

\smallskip
\begin{remark}
	For simplicity, we adopt a common $\tau$ to update the auxiliary variables $z_{\ell}$, $z_{\textrm{f}}$ and $z_{\textrm{c}}$. In principle, each entity involved within the ADAL in Algorithm~\ref{alg:ADAL} can locally set its own step size according to the degree of each constraint in \eqref{eq:KKT_into_leader_relaxed_compact}, see \cite[\S II.A]{chatzipanagiotis2017convergence}.
	\hfill$\square$
\end{remark}

%%%%%%%%%%%%%%%%%%%%%%%%%%%%%%%%%%%%%%%%%%%%%%%%%%%%%%%%%%%%%%%%%%%%%%%%
\section{Numerical case study: Charging coordination of plug-in electric vehicles}
\subsection{Numerical simulation setup}
We consider a set of \glspl{PEV} (followers), $\mathcal{I} \coloneqq \left\{ {1,2, \ldots ,N} \right\}$, \blue{which has to be charged} over a certain horizon $\mc{T} \coloneqq \{1,\ldots,T\}$. All \glspl{PEV} are connected to an aggregator (leader, e.g., a retailer), which manages the energy requirements of the fleet by purchasing the electricity from the wholesale energy market. Let us define $x_i \coloneqq \col(\{x_i^j\}_{j \in \mc{T}})$, and $p \coloneqq \col(\{p^j\}_{j \in \mc{T}})$ as the amount of requested energy by the fleet and the price of energy over time, \blue{i.e.}, the strategy of the $i$-th follower and of the leader, respectively. For every \gls{PEV} $i \in \mc{I}$, we consider the cost function 
%\begin{multline*}
$
J_i(p, \bs{x}) = q_i x_i^\top x_i+c_i^\top x_i -\left( -s_i x_i^\top x_i +\kappa_i^\top x_i + p^\top x_i \right) +\delta \left\| x_i-\sigma(\bs{x})\right\|^2,
$
%\end{multline*}
where  $\bs{x} \coloneqq \col(\{x_i\}_{i \in \mc{I}})$, $q_i$, $c_i > 0$ depend on the nominal voltage and on the capacity loss of \blue{each} battery, while $\kappa_i$, $s_i > 0$ model the battery size and the satisfaction of the $i$-th \gls{PEV} for charging the amount $x_i$.
\blue{Moreover, the term} $(q_i x_i^\top x_i+c_i^\top x_i)$ \blue{denotes} the battery degradation cost, $( -s_i x_i^\top x_i +\kappa_i^\top x_i + p^\top x_i )$ the benefit for charging \cite{tajeddini2018mean}, and $\blue{\delta} \|x_i-\sigma(\bs{x})\| ^2$ a penalty for deviating from the average charging profile, $\sigma(\bs{x}) \coloneqq \tfrac{1}{N} \bsone^\top \bs{x}$, \blue{with} $\delta > 0$. \blue{On the other hand,} the leader \blue{aims at maximizing the following cost function}
\begin{equation}\label{CostLeaderSim}
J_0(p, \bs{x}) = -p^\top \left( D + \sigma(\bs{x})\right),
\end{equation}
which represents the economic benefit for charging the \glspl{PEV}, where $D \in \R^T$ is the total non-\gls{PEV} demand over time.
We assume that the net energy available for the \glspl{PEV} is fixed\blue{, and therefore} the overall \gls{PEV} demand \blue{shall meet the} capacity constraint
$
 \tfrac{1}{N} \bsone^\top \bs{x} \le C,
$
for some $C > 0$. Furthermore, we assume that\blue{, at every time step,} $\blue{x_i \in [\underline{x}_i,  \overline{x}_i]}$, for all $i \in \mc{I}$.
Thus, given the amount of energy requested by the PEVs, the retailer chooses a price $p$ per unit of energy, \blue{with $p \in [0, \overline{p}]$}, aiming at maximizing its revenue in \eqref{CostLeaderSim}.

For the numerical simulations, we consider $N = 10^4$ PEVs, a charging horizon discretized into $T = 24$ time intervals, $q_i = 1.2\cdot10^{-3}$, $c_i = 0.11$, while $\kappa_i$ and $s_i$ are randomly drawn from $\mc{N}(12, 2)$ and $\mc{U}(0.02, 0.1)$ respectively, while the capacity upper bound $C$ is equal to $1.5$ from $11$pm to $8$am, and to $0.5$ for the rest of the day. The convergence behavior of Algorithm~\ref{alg:two_layers} over $10$ experiments is shown in Fig.~\ref{error}. During the numerical simulations, the inner loop takes between $50$ and $75$ iterations (on average) to meet a predefined stopping condition, and above $10^2$ experiments, we did not experienced any influence on the outer loop convergence behavior\blue{. For this latter, in view of the fact that $\textrm{lim}_{k \to \infty} \, \| \hat{\bs{\omega}}(\bs{\omega}^k) - \bs{\omega}^k \| = 0$, we have chosen $\| \hat{\bs{\omega}}^k - \hat{\bs{\omega}}^{k-1} \| \leq 10^{-4}$ as stopping criterion}.

\begin{algorithm}[!t]
	\caption{Two-layer na\"ive method for \gls{l-SE} computation}\label{alg:naive_alg}
	\DontPrintSemicolon
	\SetArgSty{}
	\SetKwFor{ForAll}{for all}{do}{end forall}
	\textbf{Initialization:} $y_0(0) \in \mc{Y}_0$\\
%	\begin{itemize}
%		\item[] 
%	\end{itemize}
	\smallskip
	\textbf{Iteration $(k \in \N)$:} \\
	\begin{itemize}\setlength{\itemindent}{.75cm}
	\item[(\texttt{S1})] Compute an \gls{v-GNE}, $\bs{x}(k)$, for the game in \eqref{eq:single_prob}\\
	\smallskip
	\item[(\texttt{S2})] Compute $y^\ast_0(k)$, solution to \eqref{eq:single_prob_leader}\\
	\smallskip
	\item[(\texttt{S3})] Update $y_0(k) \coloneqq (1-\beta(k)) y_0(k-1) + \beta(k) y^\ast_0(k)$
	\end{itemize}
\end{algorithm}

The procedure proposed in Algorithm~\ref{alg:two_layers} is then compared with the simplest na\"ive method for possibly computing an $\ell$-SE, which main steps are summarized in Algorithm~\ref{alg:naive_alg}. Specifically, given the strategy of the leader at the previous step, the followers compute an \gls{v-GNE} of the game in \eqref{eq:single_prob}, and send their strategy back to the leader (\texttt{S1}). In turn, the leader first solves its optimization problem in \eqref{eq:single_prob_leader} with solution $y^\ast_0(k)$ (\texttt{S2}), and then updates its strategy taking a convex combination between $y^\ast_0(k)$ and the strategy at the previous step, where the parameter $\beta(k) \in [0,1]$ introduces a possible inertia (\texttt{S3}).
%For example, we can set $\beta(k)$ as a square summable learning rate (i.e., $\sum_{k=1}^{\infty} \beta(k) = \infty$, $\sum_{k=1}^{\infty} \beta^2(k) <\infty$).
%Finally, the leader broadcasts its strategy back to the followers and the algorithm moves to the next iteration. For the leader, we consider the cost function 
%$$
%J_0(p, \bs{x}) = -p^\top \left( D + \sigma(\bs{x})\right)+\eta \left( \sigma(\bs{x}) -y \right)^2,
%$$
%where $\eta > 0$ is a design parameter and $y$ is specified by the aggregator using the historical demand data to cope with the valley filling problem. 
Note that, albeit rather intuitive, this na\"ive algorithm has no converge guarantees. However, in our numerical experience, by considering the cost function in \eqref{CostLeaderSim} for the leader and setting $\beta(k) = 1/k$, Algorithm~\ref{alg:naive_alg} apparently shows a slower convergent behavior compared with the proposed Algorithm~\ref{alg:two_layers}, as depicted in Fig.~\ref{error} over $10$ numerical experiments.

%Figure \ref{demand} shows the converged demand for each groups. As it is shown, the PEVs mostly charge during off-peak times where the leader has more capacity. The converged price (leader's strategy) is shown in Figure \ref{utility}. 
%\begin{figure}
%	\centering
%	\includegraphics[width=\columnwidth]{power.eps}
%	\caption{Normalized charging power for different PEV groups.}
%	\label{demand}
%\end{figure}	
%\begin{figure}
%	\centering
%	\includegraphics[width=\columnwidth]{price2.eps}
%	\caption{Price profile update.}
%	\label{utility}
%\end{figure}	
%As convergence criteria, we consider that $ {\epsilon _k}$ must be less than a certain tolerance value, $\epsilon_{stop}$,  where, $\epsilon_k = 
%\left\|\left[ \bs{y}(k)^\top ; y_0(k)^\top ; \nu(k)^\top\right]\right\| $.

%To some extent, for $ {\epsilon_{stop}}  = 10^{-4}$, the SCA algorithm converges in $68$ iterations. 
%Figure \ref{iteration} shows the convergence scenarios parametric on the followers' population size, where $10^2$ experiments are run for each value of $N$. We conclude that the population size does not affect the convergence speed of the proposed algorithms.\\
\begin{figure}
	\centering
	\includegraphics[width=.99\columnwidth]{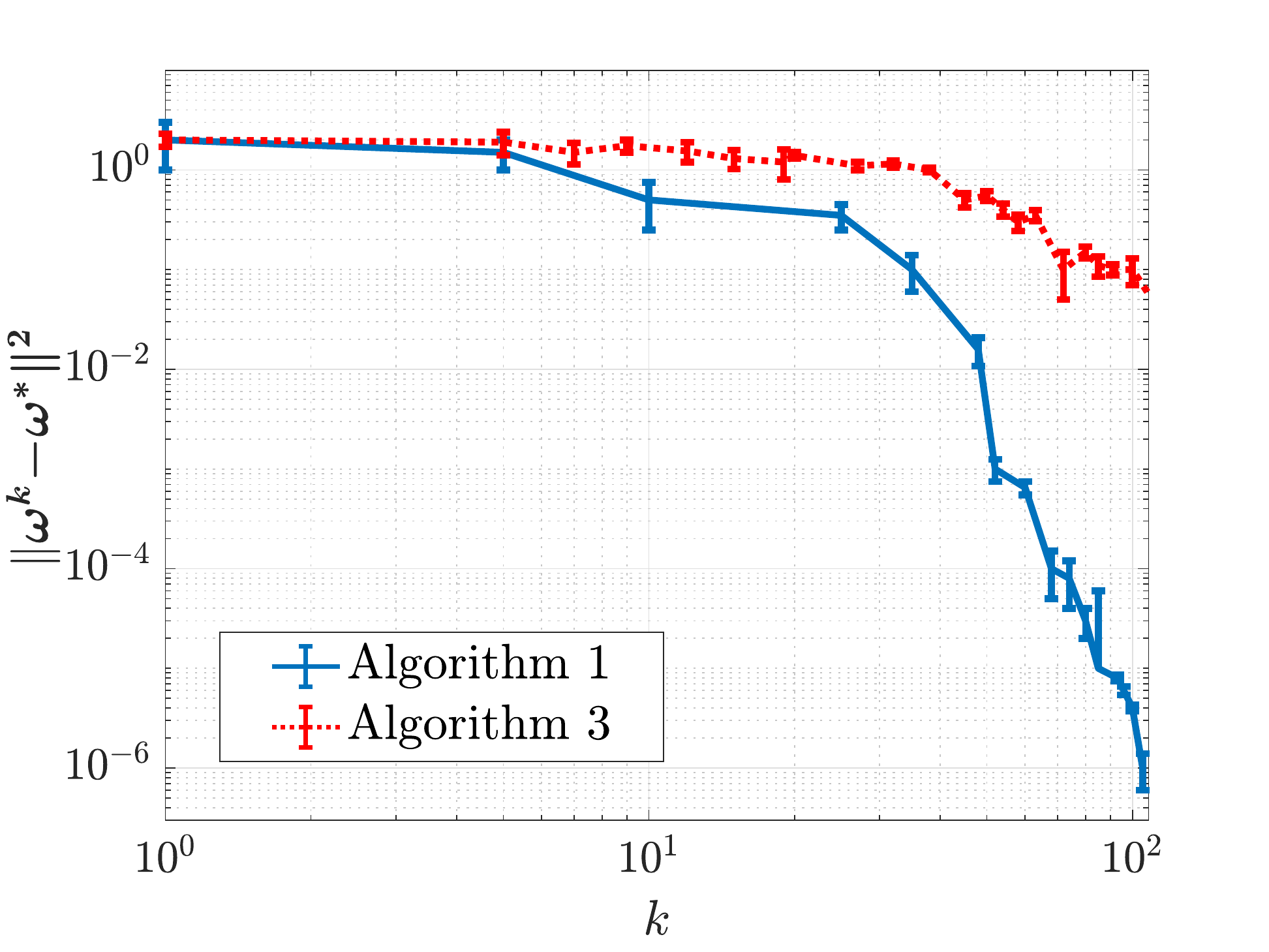}
	\caption{Comparison of the convergence behavior between Algorithm~\ref{alg:two_layers} (solid blue line) and \ref{alg:naive_alg} (dotted red line).}
	\label{error}
\end{figure}

%\begin{figure}
%	\centering
%	\includegraphics[width=\columnwidth]{iteration3.eps}
%	\caption{Number of iterations to converge within a certain numerical tolerance as a function of $N$.}
%	\label{iteration}
%\end{figure}

\subsection{The trade-off between the leader and the followers}\label{subsec:tradeoff}
Finally, we highlight the trade-off role played by the relaxation parameter $\theta$ in \eqref{eq:KKT_into_leader_relaxed}. In fact, for $\theta$ sufficiently large, the leader has a larger feasible set while, on the other hand, the followers are farther away from an \gls{v-GNE}, since the complementarity condition is not exactly satisfied.
Therefore, the larger the $\theta$, the lower the optimal cost of the leader, and possibly the larger the optimal cost of each follower. Vice versa, the smaller $\theta$, the higher the optimal cost of the leader, because his feasible set shrinks, and possibly the lower the optimal cost of each follower, since the equilibrium condition is closer to being satisfied. This behavior is essentially confirmed in Fig.~\ref{theta} where, for ease of visualization, we show the normalized benefit of the leader ($J_0^\star(\theta)$) and the normalized maximum disadvantage among the followers ($\Delta J^\star(\theta)$) as $\theta$ increases. Specifically, for each $\theta \in [\underline{\theta},1]$, we compute an $\ell$-SE, and we denote with $J_0^\star(\theta)$ the corresponding optimal cost for the leader. For the followers, we introduce and show the maximum relative disadvantage with respect to a near-equilibrium condition, i.e.,
$
\Delta J^\star(\theta) \coloneqq \textrm{max}_{i \in \mathcal{I}} \;  J_i^\star(\theta) - J_i^{\star}(\underline{\theta}),
$ 
where, for a given $\theta$, $J_i^\star(\theta)$ is the optimal cost for the $i$-th follower, while in this case we set $\underline{\theta}$ equal to $10^{-6}$.
%%For ease of visualization, we normalize both $J_0^\star(\theta)$ and $\Delta J^\star(\theta)$. 
%In , for ease of visualization, we show the normalized benefit of the leader ($J_0^\star(\theta)$) and the normalized maximum disadvantage among the followers ($\Delta J^\star(\theta)$) as $\theta$ increases, with $\underline{\theta} = 10^{-6}$. Essentially, the plot confirms our insight on the trade-off role between the leader and the followers played by the regularization parameter $\theta$.

\begin{figure}
	\centering
	\includegraphics[width=.99\columnwidth]{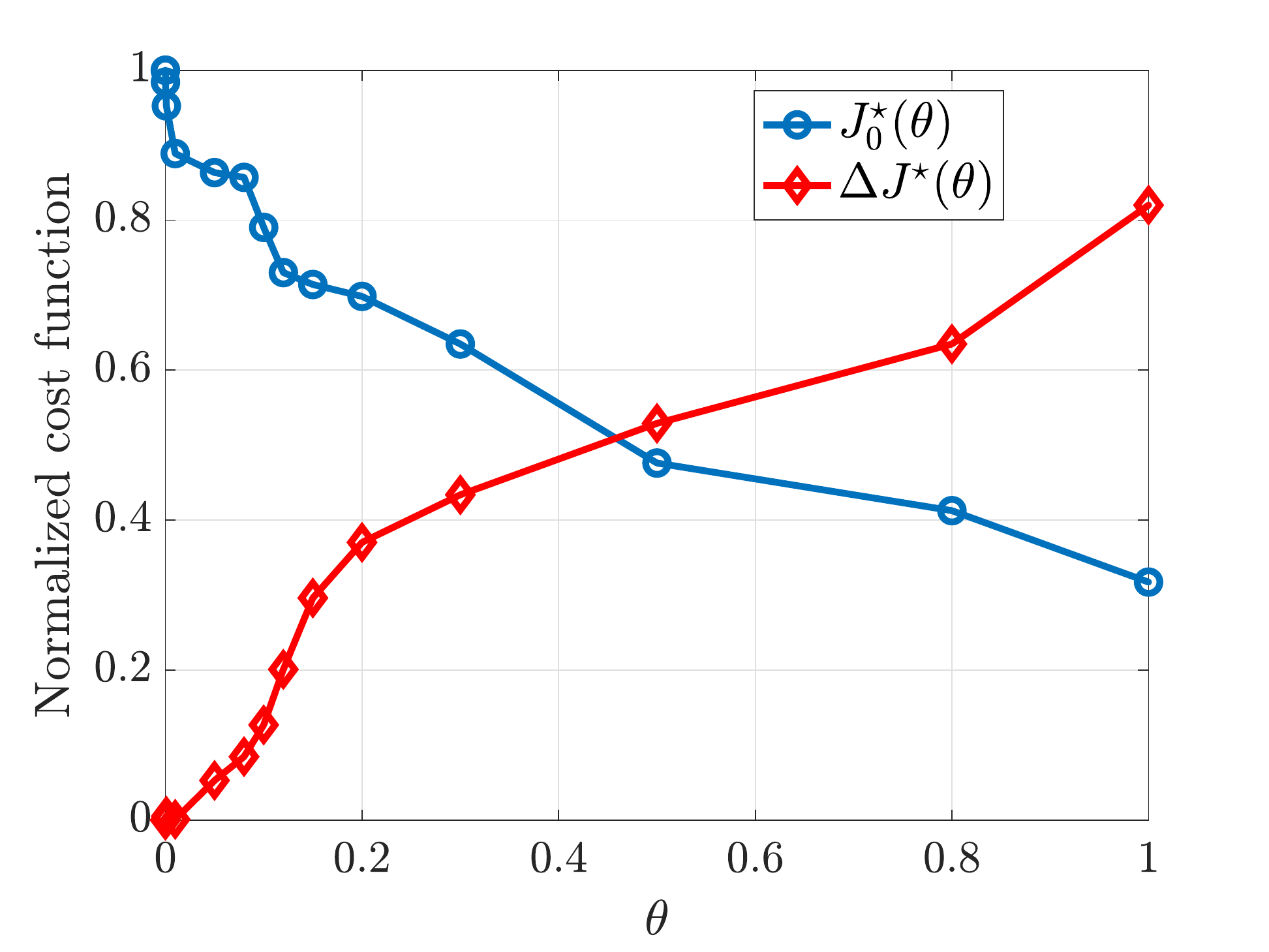}
	\caption{Trade-off role played by the regularization parameter $\theta$.}
	\label{theta}
\end{figure}

\section{Conclusion}

We have considered a multi-agent, hierarchical equilibrium problem with one leader and multiple followers, with possibly nonconvex data for the leader, convex-quadratic objective functions and linear constraints for the followers, and overall an aggregative structure.
In this setup, a local Stackelberg equilibrium can be approximated arbitrarily close via the relaxation of the complementarity condition that represents the equilibrium among the followers. In turn, the relaxed problem can be solved via a two-layer algorithm, which - thanks to the aggregative structure - requires semi-decentralized computations and information exchange.

%%%%%%%%%%%%%%%%%%%%%%%%%%%%%%%%%%%%%%%%%%%%%%%%%%%%%%%%%%%%%%%%%%%%%%%%
%\balance
\bibliographystyle{IEEEtran}
\bibliography{stackeq.bib}

\end{document}